\DeclareMathAlphabet{\mathfrak}{U}{jkpmia}{m}{it}
\SetMathAlphabet{\mathfrak}{bold}{U}{jkpmia}{bx}{it}
\numberwithin{equation}{section}
\definecolor{WIMgreen}{RGB}{60 134 132}
\definecolor{UMblue}{RGB}{4 47 86}
\definecolor{myteal}{RGB}{0 123 137}
\definecolor{material_green}{RGB}{27 43 52}
\definecolor{dracula_pink}{RGB}{180 93 149}
\definecolor{dracula_blue}{RGB}{40 42 54}
\definecolor{dracula_turq}{RGB}{92 143 159}
\definecolor{dracula_orange}{RGB}{255 184 108}
\definecolor{material_petrol}{RGB}{2 119 189}
\definecolor{Purple}{RGB}{103 58 183}
\definecolor{cs}{rgb}{0.0, 0.44, 1.0}
\definecolor{oucrimsonred}{rgb}{0.6, 0.0, 0.0}
\theoremstyle{plain}
\newtheorem{theorem}{Theorem}[section]
\newtheorem*{theorem*}{Theorem}
\newtheorem{proposition}[theorem]{Proposition}
\newtheorem{lemma}[theorem]{Lemma}
\theoremstyle{definition}
\newtheorem{definition}[theorem]{Definition}
\theoremstyle{remark}
\newtheorem{remark}[theorem]{Remark}
\newtheorem{example}[theorem]{Example}
\newtheorem{excont}{Example}
\def\supp{\operatorname{supp}}
\def\E{\mathbb{E}}
\def\N{\mathbb{N}}
\def\N{\mathbb{N}}
\def\R{\mathbb{R}}
\definecolor{darkred}{rgb}{0,0.6,0}
\def\cX{\mathcal{X}}
\newcommand{\cB}{\mathcal{B}}
\newcommand{\cF}{\mathcal{F}}
\newcommand{\PP}{\mathbb{P}}
\newcommand{\QQ}{\mathbb{Q}}
\renewcommand{\hat}{\widehat}
\renewcommand{\tilde}{\widetilde}%
\newcommand{\lebesgue}{\bm{\lambda}}
\newcommand{\overbar}[1]{\mkern 1.5mu\overline{\mkern-1.5mu#1\mkern-1.5mu}\mkern 1.5mu}
\DeclareMathOperator*{\esssup}{ess\,sup}
\newcommand*\diff{\mathop{}\!\mathrm{d}}
\newcommand{\one}{\mathbf{1}}
\newcommand{\vertiii}[1]{{\left\vert\kern-0.25ex\left\vert\kern-0.25ex\left\vert #1
\right\vert\kern-0.25ex\right\vert\kern-0.25ex\right\vert}}
\let\originalleft\left
\let\originalright\right
\renewcommand{\left}{\mathopen{}\mathclose\bgroup\originalleft}
\renewcommand{\right}{\aftergroup\egroup\originalright}
\newlist{todolist}{itemize}{2}
\setlist[todolist]{label=$\square$}
\newcommand{\specificthanks}[1]{\@fnsymbol{#1}}
\title{\fontsize{14}{16} \selectfont Covariate shift in nonparametric regression with Markovian design}
\author{Lukas Trottner\thanks{Aarhus University, Department of Mathematics, Ny Munkegade 118, 8000 Aarhus C, Denmark \\
Email: \href{mailto:trottner@math.au.dk}{trottner@math.au.dk}}}
\date{\vspace{-5ex}}
\begin{document}
\renewcommand\thmcontinues[1]{continued}
\maketitle

\begin{abstract} 
Covariate shift in regression problems and the associated distribution mismatch between training and test data is a commonly encountered phenomenon in machine learning. In this paper, we extend recent results on nonparametric convergence rates for i.i.d.\ data to Markovian dependence structures. We demonstrate that under Hölder smoothness assumptions on the regression function, convergence rates for the generalization risk of a Nadaraya--Watson kernel estimator are determined by the similarity between the invariant distributions associated to source and target Markov chains. The similarity is explicitly captured in terms of a bandwidth-dependent similarity measure recently introduced in Pathak, Ma and Wainwright [\textit{ICML}, 2022]. Precise convergence rates are derived for the particular cases of finite Markov chains and  spectral gap Markov chains for which the similarity measure between their invariant distributions grows polynomially with decreasing bandwidth. For the latter,  we extend the notion of a distribution transfer exponent from Kpotufe and Martinet [\textit{Ann.\ Stat.}, 49(6), 2021] to kernel transfer exponents of uniformly ergodic Markov chains in order to generate a rich class of Markov kernel pairs for which convergence guarantees for the covariate shift problem can be formulated.
\end{abstract}
\section{Introduction} \label{sec:intro}
In the classical nonparametric regression setup the statistician is given i.i.d.\ covariate-response pairs $(X_i,Y_i)_{i = 1,\ldots,n}$ obeying the relation $Y_i = f^\ast(X_i) + \xi_i$ for some regression function $f^\ast$ and i.i.d.\ centered noise variables $(\xi_i)_{i = 1, \ldots, n}$. The task is to estimate $f^\ast$ and provide convergence rates w.r.t.\ the risk
\[\E\big[\lvert f^\ast(X) - \hat{f}(X) \rvert^2\big] = \int \E\big[\lvert f^\ast(x) - \hat{f}(x)\rvert^2\big] \,\PP_X(\diff{x}),\] 
for $X \sim X_1$ independent of $(X_i,\xi_i)_{i=1,\ldots,n}$, which can be interpreted as the prediction error or generalization risk for test data that is independent of the  training set $(X_i,Y_i)_{i=1,\ldots,n}$. However, for many applications it is not appropriate to assume that training and target distributions of the covariates are identical---that is the law of $X$ differs from the law of the training covariates $X_i$---begging the question how such \textit{covariate shift} influences the generalization performance of estimation methods that are known to provide optimal convergence rates in the classical setup. Such covariate shift problems along with the more general concept of  \textit{transfer learning} have drawn a lot of attention in the recent past \cite{bendavid10,cai21, sugi07, kpotufe21,christiansen22,pathak22,sh22} due to their natural occurence in applications in machine learning and other disciplines such as natural language processing \cite{yamada10,hassan13}, text image recognition \cite{zhang19}, computer vision \cite{larochelle20, wang18}, bioinformatics \cite{shaw18, mourragui19} or  economics \cite{heckman79}. 

The main inspiration for this paper is \citep{pathak22}, where a fundamental generalization error decomposition for the following i.i.d.\ covariate shift setup is proved: assume that a split sample of covariate response pairs $\mathcal{S} = (X_i,Y_i)_{i=1,\ldots,n}$ is observed  s.t.\ $\mathcal{S}_P \coloneq (X_1,\ldots, X_{n_P}) \overset{\mathrm{i.i.d.}}{\sim} P$ and $\mathcal{S}_Q \coloneq (X_{n_P+1},\ldots,X_{n_Q + n_P}) \overset{\mathrm{i.i.d.}}{\sim} Q$ are independent samples with sizes $n_P$ and $n_Q = n - n_P$, resp., and assume that the response mechanism $f^\ast(x) = \E[Y_i \mid X_i = x]$ is independent from the covariate distributions $P$ and $Q$. In other words, the training set of the model only partially represents the target distribution $Q$ with a possibly much larger batch of the training data coming from a distribution $P$ that has no prescribed relation to $Q$ other than sharing the same metric state space $\cX$. The regression function $f^\ast$, however,  is not influenced by the different distributional characteristics of source and target data. For the $Q$-generalization risk 
\[\int_{\cX} \E\big[\lvert f^\ast(x) - \hat{f}_h(x) \rvert^2\big] \, Q(\diff{x}),\] 
of a Nadaraya--Watson kernel estimator $\hat{f}_h$ with bandwith $h$ based on the sample $\mathcal{S}$, under Hölder smoothness assumptions on the regression function $f^\ast$, \cite{pathak22}  derive a decomposition in terms of a smoothness dependent bias and the dissimilarity between the weighted training distribution $\mu_n \coloneq (n_P P + n_Q Q)/n$ and target distribution $Q$ representing the difficulty of transfer from source to target distribution. This dissimilarity is expressed as $\rho_h(\mu_n,Q)$ for the bandwidth dependent (dis)similiarity measure 
\[\rho_h(\PP,\QQ) \coloneq \int_{\cX} \frac{1}{\PP(B(x,h))} \, \QQ(\diff{x}),\]
of probability distribution pairs $(\PP,\QQ)$ on $\cX$, where $B(x,h)$ is the closed ball in $\cX$ centered at $x$ with radius $h$. This error decomposition is  demonstrated to yield explicit minimax optimal estimation rates for source-target pairs $(P,Q)$ on $\cX = [0,1]$ belonging to a so called $\alpha$-family which is closely related to probability distribution pairs having a \textit{transfer exponent} as introduced in \cite{kpotufe21}.

\paragraph{Contribution and organization of the paper} 
Common to the above theoretical transfer learning studies is the assumption that the split training samples $\mathcal{S}_P$ and $\mathcal{S}_Q$ are both i.i.d. This assumption is clearly violated in a variety of applications, most strikingly  in situations where the training data is sequentially generated and thus a certain degree of temporal dependence is inherently present. To give a concrete example, this is the case in speech and language processing, where $n$-gram models and hidden Markov models are popular choices to approximate the sequential dependency of words and letters \cite{jurafsky09, manning99, plotz09}. The covariate shift issue is an especially prominent phenomenon for such applications due to the limited availability and expensiveness of converted documents or audio files into labeled training sets for a specific regression task. This may lead to little or no training data being available for the target data that may differ in several characteristics from the source data such as changes in environment, 
document type 
or genre.
This naturally raises the question of which  convergence guarantees can be given for the covariate shift problem when a specific dependence structure is underlying the training samples and how these guarantees compare to the situation when the data is i.i.d.

Regression problems with Markovian design have attracted increasing interest lately cf.\ \cite{fan22, gong22}. In this paper, we continue this line of research by providing a non-asymptotic convergence analysis for covariate shift regression given Markovian dependence  within the covariate samples $\mathcal{S}_P$ and $\mathcal{S}_Q$, where $P$ and $Q$ do no longer represent static probability distributions but the Markov kernels $P$ and $Q$ governing the temporal evolution of the Markovian data $\mathcal{S}_P = (X_0^{P}, \ldots, X^P_{n_P-1})$ and $\mathcal{S}_Q = (X_0^{Q}, \ldots, X^Q_{n_Q-1})$, respectively. The model is introduced in detail in Section \ref{sec:markovshift}.    We demonstrate that when  $\mathcal{S}_P$ and $\mathcal{S}_Q$ consist of (not necessarily stationary) Markov sequences having a \textit{pseudo spectral gap} and we consider the generalization risk 
\[\int_{\cX} \E\big[\lvert f^\ast(x) - \hat{f}_h(x) \rvert^2\big] \, \pi^Q(\diff{x}),\] 
where $\pi^Q$ is the invariant distribution associated to the target kernel $Q$, we have a similar decomposition of the risk for Hölder continuous regression functions as in \cite{pathak22}, with the dissimilarity $\rho_h(\mu_n,\pi^Q)$ between the ergodic mean training distribution $\mu_n \coloneq (n_P \pi^P + n_Q \pi^Q)/n$ and the target mean distribution $\pi^Q$ representing the difficulty of transfer from the source Markov kernel $P$ to the target Markov kernel $Q$. We then relate this bound to the prediction error 
\[\E\big[\big(f^\ast(X^Q_{n_Q + m}) - \hat{f}_h(X^Q_{n_Q+m})\big)^2\big],\] 
where the test data is no longer independent of the target training data $\mathcal{S}_Q$ but stems from the same Markov chain $X^Q$. 
The exact statements are given by Theorem \ref{theo:upper} and Theorem \ref{theo:pred}. The proof of Theorem \ref{theo:upper} fundamentally relies on concentration bounds for pseudo spectral gap Markov chains from \cite{paulin15}. These together with the underlying basics on spectral gap Markov chains are presented in Section \ref{sec:specgap}.

The general upper bound from Theorem \ref{theo:upper} is translated in Section \ref{sec:explicit} into explicit estimation rates for finite Markov chains as well as Markov chains whose invariant distributions belong to an $\alpha$-family, which we define as the natural multivariate extension of the class of distribution pairs on $\cX= [0,1]$ introduced in \cite{pathak22}. To construct explicit examples of Markov chains s.t.\ their invariant distributions satisfy the $\alpha$-family property, we develop an extension of the transfer exponent between probability distributions from \cite{kpotufe21} to Markov kernels of uniformly ergodic Markov chains. The relation of such chains having a transfer exponent and $\alpha$-families is discussed in detail and specific examples are given. Here we put a special  emphasis on the influence of the dimension of the target Markov chain $X^Q$, which as a convenient byproduct also extends the scalar i.i.d.\ discussion from \cite{pathak22} to higher dimensions. In particular, we demonstrate that in case of a dimension reduction from source to target data, the generalization rate can be faster than in the situation without covariate shift. 

\paragraph{Notation}
For two numbers $a,b \in \R$ we write $a \vee b \coloneq \max\{a,b\}$ and $a \wedge b \coloneq \min\{a,b\}$.  Given some metric space $(\cX,d)$,  $\cB(\cX)$ denotes the Borel $\sigma$-algebra on $\cX$. If $\cX = \R^{\mathsf{d}}$, $\mathsf{d} \in \N$, the metric $d$ is always understood to be induced by some vector norm. For a measure $\mu$ on $(\cX,\cB(\cX))$ and a function $h \colon \cX \to \R$ that is either nonnegative or such that $\int_{\cX} \lvert h(x) \rvert\, \mu(\diff{x}) < \infty$, we write $\mu(h) = \int_{\cX} h(x) \, \mu(\diff{x})$. By $L^p(\mu)$ we denote the usual $L^p$-space w.r.t.\ $\mu$ for $p \in [1,\infty]$ equipped with the norm $\lVert f \rVert_{L^p(\mu)} \coloneq \mu(\lvert f \rvert^p)^{1/p}$ for $p < \infty$ and $\lVert f\rVert_{L^\infty(\mu)} \coloneq \mu\text{-}\esssup \lvert f \rvert$ for $p = \infty$. The Lebesgue measure on $\R^{\mathsf{d}}$ is denoted by $\lebesgue$ or sometimes $\lebesgue_{\mathsf{d}}$ if we want to emphasize the dimension $\mathsf{d}$.  We let $B_d(x,r) \coloneq \{y \in \cX : d(x,y) \leq r\}$ be the closed $d$-balls centered at $x \in \cX$ with radius $r$ and denote by $N_{\cX}(\varepsilon,d)$  the $\varepsilon$-covering number of $\cX$ w.r.t.\ the metric $d$, i.e., the smallest number of $d$-balls with radius $\varepsilon$ s.t.\ their union is equal to $\cX$. If $\cX \subset \cX^\prime$ for some metric space $(\cX^\prime,d)$, we let $N^{\mathrm{ext}}_{\cX}(\varepsilon,d\vert_{\cX})$ be the external covering number of $\cX$, i.e., the smallest number of $d$-balls in $\cX^\prime$ with radius $\varepsilon$ s.t.\ their union contains $\cX$.   If there is no room for confusion we drop the indices  and simply write $B(x,r)$ for closed $d$-balls and $N(\varepsilon,d)$ for the $d$-covering number instead. If $\cX^\prime$ is compactly contained in $\cX$, we write $\cX^\prime \Subset \cX$. Finally, we let $[n] \coloneq \{1,\ldots,n\}$ for $n \in \N$ and for $B \subset \cX$ we let $\one_B$ be the indicator function w.r.t.\ $B$, i.e., $\one_B(x) = 1$ if $x \in B$ and $\one_B(x) = 0$ otherwise.

\section{Covariate shift regression model}\label{sec:markovshift}
We consider the following model under covariate shift. Let $(\cX,d)$ be a complete, separable metric space. Let $X^P= (X^P_n)_{n \in \N_0}$ be an irreducible and aperiodic  Markov chain on $\cX$ with transition operator $P$, unique invariant distribution $\pi^P$ and \textit{pseudo spectral gap} $\gamma^P_{\mathrm{ps}} > 0$, and $X^Q= (X^Q_n)_{n \in \N_0}$ be an irreducible and aperiodic Markov chain on $\cX$ that is independent of $X^P$ with transition operator $Q$, unique invariant distribution $\pi^Q$ and pseudo spectral gap $\gamma^Q_{\mathrm{ps}} > 0$. These notions are made precise in Section \ref{sec:specgap}. We do not require the chains to be stationary, but only to have  a \textit{warm start}. That is, we assume that $\PP_{X_0^P} = \mu^P$ for some initial distribution $\mu^P \ll \pi^P$, whose Radon--Nikodym derivative satisfies $\diff{\mu^P}/\diff{\pi^P} \in L^{\mathfrak{p}}(\pi^P)$ for some $\mathfrak{p} \in (1,\infty]$ and similarly assume $\PP_{X_0^Q} = \mu^Q \ll \pi^Q$ with $\diff{\mu^Q}/\diff{\pi^Q} \in L^{\mathfrak{q}}(\pi^Q)$ for some $\mathfrak{q} \in (1,\infty]$. For given $n, n_P, n_Q \in \N, n = n_P + n_Q$, we observe the covariate-response pairs $(X_i,Y_i)_{i = 1,\ldots, n}$, where 
\[X_i = \begin{cases} X_{i-1}^P, &\text{if } i \in \{1,\ldots,n_P\}\\ X_{i-1}^Q, &\text{if } i \in \{n_P +1, \ldots, n_P + n_Q\}, \end{cases} \]
and
\[Y_i = f^\ast(X_i) + \xi_i, \quad i =1, \ldots, n,\]
for some unknown regression function $f^\ast$ and a noise vector $(\xi_i)_{i \in [n]}$  given by
\[\xi_i = \begin{cases} \xi_{i-1}^P, &\text{if } i \in \{1,\ldots,n_P\}\\ \xi_{i-1}^Q, &\text{if } i \in \{n_P +1, \ldots, n_P + n_Q\}, \end{cases} \]
where the heteroskedastic random sequences $\xi^P = (\xi_i^P)_{i \in \N_0} \subset L^2(\PP)$ and $\xi^Q= (\xi_i^Q)_{i \in \N_0} \subset L^2(\PP)$ satisfy the following assumptions: 
\begin{enumerate} 
\item $(X^P,\xi^P)$ and $(X^Q,\xi^Q)$ are independent;
\item for $R \in \{P,Q\}$, the elements of $\xi^R$ are conditionally independent given $X^R$;
\item for $R \in \{P,Q\}$ and some $\sigma > 0$ it holds  $\PP$-a.s.for any $i \in \N_0$,
\[\E[\xi_i^R \mid X^R]  = 0, \quad \E[(\xi_i^R)^2 \mid X^R] \leq \sigma^2.\] 
\end{enumerate}
These assumptions are  satisfied, e.g., when $\xi^R_i = \varsigma(X_i^R) \varepsilon_i^R$ for some i.i.d.\ sequences $(\varepsilon^R_i)_{i \in N_0}$ independent of each other and of $(X^P,X^Q)$ with $\E[\varepsilon_1^R] = 0$, $\E[(\varepsilon_1^R)^2] \leq 1$, and $\varsigma \colon \cX \to [-\sigma,\sigma]$. 
By $\mu_n$ we define the ergodic mean training distribution induced by the sample sizes $n_P$ and $n_Q$, given as  
\[\mu_n \coloneq \frac{n_P \pi^P + n_Q \pi^Q}{n}.\]
In the following we will refer to this setup as the \textit{$(P,Q)$-Markov covariate shift regression model}. The covariate shift regression model from \cite{pathak22} for i.i.d.\  vectors $X^P$ and $X^Q$ is included as a special case in the Markov covariate shift model for we may always interpret $X^P$ and $X^Q$ as Markov chains with transition operators $Ph \coloneq \pi^P(h), Qh \coloneq \pi^Q(h)$ having both \textit{absolute} as well as pseudo spectral gap $1$.  

We are interested in the $L^2$-generalization risk w.r.t.\ the target invariant distribution $\pi^Q$,
\[\E\big[\lVert \hat{f}_h - f^\ast \rVert^2_{L^2(\pi^Q)} \big]\] 
for a Nadaraya--Watson type kernel estimator $\hat{f}_h$ with bandwidth $h > 0$  that we introduce in Section \ref{sec:nw}. Thus, we test the estimator $\hat{f}_h$ that is based on training observations $(\mathcal{S}_P, (Y_i)_{i \in [n_P]})$ and $(\mathcal{S}_Q, (Y_i)_{i= n_P+1,\ldots,n})$ for independent Markov samples $\mathcal{S}_P = \{X^P_0,\ldots, X^P_{n^P-1}\}$ and $\mathcal{S}_Q = \{X^Q_0,\ldots, X^Q_{n^P-1}\}$ against an independent data sample coming from some stationary $Q$-Markov chain. 
Similarly to \cite{pathak22}, we show that the generalization risk may be decomposed into a smoothness dependent bias and a stochastic error expressed  in terms of the (dis)similarity measure $\rho_h(\mu_n,\pi^Q)$ between the mixture training distribution $\mu_n$ and the target distribution $\pi^Q$, where we recall from Section \ref{sec:intro} that  for two probability measures $\PP,\mathbb{Q}$ on $(\mathcal{X},d)$,
\[\rho_h(\PP,\mathbb Q) \coloneq \int_{\cX} \frac{1}{\PP(B(x,h))} \, \mathbb{Q}(\diff{x}), \quad h > 0,\] 
where $\tfrac{1}{0} \coloneq +\infty$. We also discuss implications for the prediction error 
\[\E\big[\big(\hat{f}_h(X^Q_{n_Q+m}) - f^\ast(X^Q_{n_Q + m})\big)^2\big],\] 
where the test sample is not independent of the training data but originates from the same Markov chain underlying $\mathcal{S}_Q$.

Intuitively, the similarity measure is small when $\QQ$ has no significant mass in low probability regions of $\PP$ and becomes large otherwise. In this regard, examples of probability pairs  $(\PP,\QQ)$ for the extremal case $\rho_h(\PP,\QQ) = + \infty$ may be easily constructed when $\QQ$ is not absolutely continuous w.r.t.\ $\PP$, see Lemma \ref{lem:explosion}. However, there are also examples when $\PP$ and $\QQ$ are singular but $\rho_h(\PP,\QQ) < \rho_h(\PP,\PP)$. We explore this phenomenon in more detail in Section \ref{sec:explicit} and Appendix \ref{app:exalpha}, where for $\cX \subset \R^{\mathsf{d}}$ it is shown that  if the support of $\pi^Q$ is contained in some lower dimensional subspace of $\cX$, $\rho_h(\pi^P, \pi^Q) < \rho_h(\pi^P,\pi^P)$ is possible for appropriate Markov chains $X^P$ and $X^Q$. 

\section{Markov chains with pseudo spectral gap}\label{sec:specgap}
Extending the results from \cite{pathak22} to the Markovian setting requires a precise understanding of the concentration properties of the chains, which have been subject to intensive research efforts in recent decades. To this end, we focus on models belonging to the quite general and not necessarily symmetric class of Markov chains with pseudo spectral gap introduced in \cite{paulin15}. More precisely we will use the  Bernstein inequality obtained in \cite{paulin15}, which complements Hoeffding type inequalities from \cite{miasojedow14,fan21,perron04} in the spectral gap setting that we now introduce. For a more detailed background we refer to \cite[Chapter 22]{douc18}.

Let $(\cX,d)$ be a complete, separable metric space and let $Z = (Z_n)_{n \in \N_0}$ be an $\mathcal{X}$-valued $\varphi$-irreducible and aperiodic Markov chain on some probability space $(\Omega,\cF,\PP)$ with transition operator $P$ and unique invariant distribution $\pi$. Thus, for $\PP^x \coloneq \PP(\cdot \mid X_0 = x)$ and $\PP^\mu(\cdot) \coloneq \int_{\cX} \PP^x(\cdot)\, \mu(\diff{x})$ for a probability measure $\mu$ on $(\cX,\cB(\cX))$ we have $Pf(x) = \E^x[f(Z_1)]$ and the stationarity property $\E^\pi[f(Z_n)] = \pi(f)$ given $f\colon \cX \to \mathbb{C}$ for which the integrals are well-defined. By the usual slight abuse of notation, we will not distinguish between the transition operator $P$ and the Markov kernel $P\colon \cX \times \mathcal{B}(\cX) \to [0,1]$ given by $P(x,B) = \PP^x(Z_1 \in B)$ for $(x,B) \in \cX \times \cB(\cX)$. In particular, we have for $f$ as above,
\[P^nf(x) = \E^x[f(Z_n)] = \int_{\cX^n} P(x,\diff{x_1}) \prod_{i=1}^{n-1} P(x_i,\diff{x_{i+1}})\, f(x_n),\]
and we let $P^n(x,B) \coloneq \PP^x(Z_n \in B) = P^n \one_B(x)$ for $(x,B) \in \cX \times \cB(\cX)$. 

Let $L^2(\pi)$ be the Hilbert space of square integrable functions w.r.t.\ $\pi$ endowed with the inner product  $\langle g,h \rangle_\pi \coloneq \pi(g \overbar{h})$ and norm $\lVert h\rVert_\pi \coloneq \pi(\lvert h \rvert^2)^{1/2} = \lvert \langle h,  h\rangle_{\pi}\rvert^{1/2}$.
$P$ can be extended to a bounded linear operator on $L^2(\pi)$ and we denote by $P^\ast$ its adjoint. For any bounded linear operator $A$ on $L^2(\pi)$ let $\vertiii{A}_{\pi} \coloneq \sup_{\lVert f \rVert_\pi = 1} \lVert Af \rVert_{\pi}$ be its operator norm and $\rho(A)$ be its spectral radius. We say that $P$ has \textit{absolute spectral gap} $\gamma^P_\ast$ if 
\[\gamma^P_\ast = 1 - \varrho(P - \Pi) > 0,\]
where $\Pi(h) \coloneq \pi(h)$ for $h \in L^2(\pi)$. Taking into account Gelfand's formula 
\[\varrho(P - \Pi) = \lim_{n  \to \infty} \vertiii{(P - \Pi)^n}^{1/n}_{\pi} = \lim_{n \to \infty} \vertiii{P^n - \Pi}^{1/n}_\pi,\] 
where we used invariance of $\pi$ for the second equality, it is clear that the existence of an absolute spectral gap is equivalent to $L^2(\pi)$-geometric convergence in the sense $\vertiii{P^n - \Pi}_\pi \leq c\delta^n$ for some $\delta \in (0,1)$.
If additionally $Z$ is reversible, i.e., $P$ is self-adjoint, we have precisely $\delta = \varrho(P - \Pi) \in (0,1)$, since normality of $P - \Pi$ gives
\[\vertiii{P^n - \Pi}_{\pi} = \vertiii{(P -\Pi)^n}_{\pi} =\vertiii{P - \Pi}^n_{\pi} = \varrho(P - \Pi)^n.\]
With a view towards concentration inequalities for non-reversible chains, \cite{paulin15} introduces the notion of a \textit{pseudo spectral gap} related to multiplicative reversibilizations of $P$. We say that $Z$ has a \textit{pseudo spectral gap} if 
\[\gamma_{\mathrm{ps}}^P \coloneq \max_{k \in \N} \Big\{\frac{1 - \varrho((P^\ast)^k P^k - \Pi)}{k}\Big\} > 0.\]
which exists as soon as for some integer $k$ the positive, self-adjoint Markov operator $(P^\ast)^k P^k$ has an absolute spectral gap. Note in particular that if $Z$ is reversible with absolute spectral gap, we have
\begin{equation}\label{eq:abs_pseudo}
\gamma^P_{\mathrm{ps}} \geq 1 - \vertiii{(P - \Pi)^2}_{\pi} = 1 - \varrho(P - \Pi)^2 \geq 1 - \varrho(P - \Pi) = \gamma_{\ast}^P > 0.
\end{equation} 
Consequently, a spectral gap of a reversible chain always provides a lower bound for the pseudo spectral gap.
A general class of Markov chains with pseudo spectral gap studied in \cite{paulin15} are \textit{uniformly} ergodic Markov chains, i.e., chains s.t.\ for some constants $c > 0, \kappa \in (0,1)$,
\begin{equation}\label{eq:uni_erg}
\lVert P^n(x,\cdot) - \pi \rVert_{\mathrm{TV}} \leq c\kappa^n, \quad x \in \cX, n \in \N,
\end{equation} 
where $\lVert \cdot \rVert_{\mathrm{TV}}$ denotes the total variation norm of a finite signed measure. An application of the Riesz--Thorin interpolation theorem shows that 
\[\vertiii{P^n - \Pi}_{\pi} \leq 2 \sup_{x \in \cX} \lVert P^n(x,\cdot) - \pi \rVert_{\mathrm{TV}}^{1/2} \leq 2c \kappa^{n/2}, \]
cf., e.g., \cite[Section 2.1]{ferre12}, whence uniformly ergodic Markov chains admit an absolute spectral gap $\gamma_\ast^P \geq 1 -\sqrt{\kappa}$.
Moreover, uniformly ergodic chains always have a pseudo spectral gap and if we denote the \textit{mixing time} $\tau^P$ as 
\[\tau^P \coloneq \min\big\{n \in \N: \sup_{x \in \cX}\lVert P^n(x,\cdot) - \pi \rVert_{\mathrm{TV}} \leq 1/4\big\},\] 
then according to \cite[Proposition 3.4]{paulin15}, the pseudo spectral gap is controlled by the mixing time via 
\[\gamma_{\mathrm{ps}}^P \geq \frac{1}{2\tau^P}.\]
To make this lower bound explicit for specific kernels $P$, it is useful to note that uniform geometric ergodicity \eqref{eq:uni_erg} is equivalent to a variant of the classical Doeblin recurrence condition (cf.\ \cite[Theorem 16.0.2]{mt09}), that is, we have the minorization property
\begin{equation} \label{eq:doeblin}
P^m(x,\cdot) \geq \varepsilon \nu, \quad x \in \mathcal{X},
\end{equation}
for some $m \in \N$, where $\varepsilon \in (0,1]$ and $\nu$ is some probability measure on $(\cX,\mathcal{B}(\cX))$. In other words, $Z$ is uniformly ergodic if and only if the state space $\cX$ is \textit{small}, cf.\ \cite[Chapter 5]{mt09}. Given \eqref{eq:doeblin} the uniform rate of convergence in \eqref{eq:uni_erg} becomes explicit with 
\begin{equation}\label{eq:doeblin_rate}
\kappa = (1-\varepsilon)^{1/m}, \quad c = 2(1-\varepsilon)^{-1}, 
\end{equation}
cf.\ \cite[Theorem 16.2.4]{mt09}, yielding also an explicit expression for $\tau^P$ and a corresponding lower bound for $\gamma_{\mathrm{ps}}^P$. 
Such minorization properties for the semigroup are usually not satisfied when $\mathcal{X}$ is unbounded, in which case one can only hope for at most geometric ergodicity, that is, 
\[\lVert P^n(x,\cdot) - \pi^P \rVert_{\mathrm{TV}} \leq V(x)\kappa^n, \quad x \in \cX, n \in \N,\]
for some \textit{unbounded} penalty function $V$ and $\kappa \in (0,1)$. It is important to note here that if $Z$ is reversible, then geometric ergodicity  for $\pi$-a.e.\ initial value $x \in \cX$ is actually equivalent to the existence of an absolute spectral gap \cite[Theorem 2]{roberts01}. However, in the non-reversible case, by \cite[Theorem 1.4]{konto12} there exist geometrically ergodic chains that do not admit an absolute spectral gap.

Let us now introduce the concentration results from \cite{paulin15}. Let $\gamma_{\mathrm{ps}} \equiv \gamma^P_{\mathrm{ps}} > 0$ and $f\colon \cX \to \R$ be bounded. Then, by \cite[Theorem 3.4]{paulin15} we have the Bernstein inequality
\[\PP^\pi\Big(\sum_{i=0}^{n-1} (f(Z_i) - \pi(f)) \geq x \Big) \leq \exp\Big(- \frac{\gamma_{\mathrm{ps}}x^2}{8(n + \gamma_{\mathrm{ps}}^{-1})\sigma^2(f) + 20\lVert f - \pi(f) \rVert_\infty x}\Big), \quad x > 0,\]
where $\sigma^2(f) = \pi(f^2) - \pi(f)^2$.
If a measure $\mu$ on $(\cX,\cB(\cX))$ satisfies $\mu \ll \pi$ with $\diff \mu /\diff \pi \in L^p(\pi)$ for some $p \in (1,\infty]$, it now follows easily from Hölder's inequality that
\begin{equation}\label{eq:bernstein}
\PP^\mu\Big(\sum_{i=0}^{n-1} (f(Z_i) - \pi(f)) \geq x \Big) \leq \exp\Big(- \frac{1}{\overbar{p}}\frac{\gamma_{\mathrm{ps}}x^2}{8(n + \gamma_{\mathrm{ps}}^{-1})\sigma^2(f) + 20\lVert f - \pi(f) \rVert_\infty x}\Big), \quad x > 0,
\end{equation}
where for $p > 1$ we denote by 
\[\overbar{p} \coloneq \begin{cases} p/(p-1), &\text{if } p < \infty,\\ 1, &\text{if } p = \infty,\end{cases}\] 
its conjugate Hölder exponent.
This concentration inequality yields the following negative moment bound for additive functionals of $Z$, which will be instrumental in the proof of Theorem \ref{theo:upper}. 
\begin{lemma}\label{lem:negmom}
Suppose that $Z$ has pseudo spectral gap $\gamma_{\mathrm{ps}} > 0$. Let $\mu$ be a measure on $(\cX,\cB(\cX))$ s.t.\ $\mu \ll \pi$ with $\diff \mu /\diff{\pi} \in L^p(\mu)$ for some $p \in (1,\infty]$. For any bounded, nonnegative function $f$ such that $\pi(f) > 0$, it holds
\[\E^\mu\Big[ \frac{1}{1 + \sum_{i=0}^{n-1} f(Z_i)}\Big] \leq  \frac{4\big\lVert \frac{\diff{\mu}}{\diff \pi} \big\rVert_{L^p(\pi)}(20\overbar{p}\lVert f -\pi(f) \rVert_\infty \gamma_{\mathrm{ps}}^{-1} +1)}{n\pi(f)}.\]
\end{lemma}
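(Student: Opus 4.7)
The plan is to split the expectation according to whether $S_n \coloneq \sum_{i=0}^{n-1} f(Z_i)$ lies above or below half of its asymptotic mean $n\pi(f)$:
\[
\E^\mu\Big[\frac{1}{1+S_n}\Big] \leq \frac{1}{1+n\pi(f)/2} + \PP^\mu\big(S_n < n\pi(f)/2\big) \leq \frac{2}{n\pi(f)} + \PP^\mu\big(S_n < n\pi(f)/2\big).
\]
Since $\lVert \diff\mu/\diff\pi\rVert_{L^p(\pi)} \geq 1$ by Jensen's inequality, the deterministic first term already accounts for the ``$+1$'' contribution in the claimed bound. What remains is to show that the lower tail probability is of order $\overbar{p}\lVert f - \pi(f)\rVert_\infty \gamma_{\mathrm{ps}}^{-1}/(n\pi(f))$, up to the same $L^p$-norm prefactor.

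To bound this probability, I would apply the Bernstein inequality \eqref{eq:bernstein} to the centered function $\pi(f) - f$, which has the same oscillation and variance as $f - \pi(f)$, at deviation level $x = n\pi(f)/2$. This yields
\[
\PP^\mu\big(S_n < n\pi(f)/2\big) \leq \Big\lVert \frac{\diff \mu}{\diff \pi}\Big\rVert_{L^p(\pi)} \exp\!\left(-\frac{\gamma_{\mathrm{ps}}(n\pi(f))^2/(4\overbar{p})}{8(n+\gamma_{\mathrm{ps}}^{-1})\sigma^2(f) + 10\lVert f-\pi(f)\rVert_\infty n\pi(f)}\right).
\]
Using the variance estimate $\sigma^2(f) \leq \pi(f)\lVert f\rVert_\infty \leq \pi(f)(\lVert f - \pi(f)\rVert_\infty + \pi(f))$, valid for nonnegative $f$, both summands in the denominator can be controlled by a constant multiple of $\lVert f - \pi(f)\rVert_\infty n\pi(f)$ in the regime $n \geq \gamma_{\mathrm{ps}}^{-1}$, so that the resulting Bernstein exponent $E$ satisfies $E \geq c\,\gamma_{\mathrm{ps}} n \pi(f)/(\overbar{p}\lVert f - \pi(f)\rVert_\infty)$ for an explicit numerical constant $c$.

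The conversion from exponential to polynomial decay is then achieved via the elementary inequality $t\e^{-t}\leq \e^{-1}$ for $t\geq 0$, which gives $\e^{-E} \leq (\e E)^{-1}$; substituting the lower bound on $E$ produces a term of order $\overbar{p}\lVert f - \pi(f)\rVert_\infty \gamma_{\mathrm{ps}}^{-1}/(n\pi(f))$. Summing with the deterministic $2/(n\pi(f))$ contribution and collecting numerical constants into the single factor $4(20\overbar{p}\lVert f - \pi(f)\rVert_\infty \gamma_{\mathrm{ps}}^{-1} + 1)$ yields the claimed inequality. The main obstacle is the careful bookkeeping of numerical constants across the Gaussian and subexponential regimes of Bernstein's inequality, together with the degenerate regime $n < \gamma_{\mathrm{ps}}^{-1}$ (or $n\pi(f)$ of unit order), in which the trivial bound $\E^\mu[1/(1+S_n)] \leq 1 \leq \lVert \diff\mu/\diff\pi \rVert_{L^p(\pi)}$ already matches the asserted estimate and the exponential-to-polynomial conversion can be bypassed.
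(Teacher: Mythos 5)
Your strategy is essentially the paper's: split the expectation at a fraction of $n\pi(f)$, bound the lower tail of $S_n\coloneq\sum_{i=0}^{n-1}f(Z_i)$ by the Bernstein inequality \eqref{eq:bernstein}, and convert the exponential into polynomial decay. The only structural difference is that you fix the splitting threshold at $n\pi(f)/2$, whereas the paper keeps a parameter $\alpha\in(0,1)$ and optimizes it at the end (taking $\alpha=\sqrt{c/(c+1)}$ with $c=20\overbar{p}\lVert f-\pi(f)\rVert_\infty\gamma_{\mathrm{ps}}^{-1}$); your sharper conversion $\e^{-E}\le(\e E)^{-1}$ compensates for the non-optimized threshold, and a direct computation confirms that your route lands within the stated constant $4(20\overbar{p}\lVert f-\pi(f)\rVert_\infty\gamma_{\mathrm{ps}}^{-1}+1)$, so no new ideas are required.

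Two points need repair, however. First, your variance estimate $\sigma^2(f)\le\pi(f)\lVert f\rVert_\infty\le\pi(f)\big(\lVert f-\pi(f)\rVert_\infty+\pi(f)\big)$ is too lossy: the resulting contribution $8(n+\gamma_{\mathrm{ps}}^{-1})\pi(f)^2$ to the Bernstein denominator is \emph{not} controlled by a constant multiple of $\lVert f-\pi(f)\rVert_\infty\, n\pi(f)$ when $f$ is close to a large constant, so the claimed lower bound on the exponent $E$ does not follow from what you wrote. Keep the subtracted square: $\sigma^2(f)=\pi(f^2)-\pi(f)^2\le\pi(f)(\lVert f\rVert_\infty-\pi(f))\le\pi(f)\lVert f-\pi(f)\rVert_\infty$, which is exactly the inequality the paper uses and makes the rest of your bookkeeping go through. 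Second, your closing remark that in the regime $n<\gamma_{\mathrm{ps}}^{-1}$ the trivial bound $\E^\mu[1/(1+S_n)]\le 1\le\lVert\diff\mu/\diff\pi\rVert_{L^p(\pi)}$ already matches the assertion is not correct: it would require $n\pi(f)\le 4\lVert\diff\mu/\diff\pi\rVert_{L^p(\pi)}(20\overbar{p}\lVert f-\pi(f)\rVert_\infty\gamma_{\mathrm{ps}}^{-1}+1)$, which fails for nearly constant $f$ with large mean. The paper's own proof silently invokes $n\ge\gamma_{\mathrm{ps}}^{-1}$ (the hypothesis is stated in Theorem \ref{theo:upper}, where the lemma is applied), so the honest resolution is to carry that hypothesis into the lemma rather than to declare the small-$n$ case trivial. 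Finally, dispose of constant $f$ separately, as the paper does: there $\lVert f-\pi(f)\rVert_\infty=0$ and the Bernstein quotient degenerates, although the bound itself is immediate from $S_n=n\pi(f)$.
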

\begin{proof} 
The bound is trivial when $f$ is constant, so let us suppose it is not, implying $\lVert f - \pi(f) \rVert_\infty > 0$. Let $\alpha \in (0,1)$ and set $S_n(g) \coloneq \sum_{i=0}^{n-1} g(Z_i)$. We have 
\begin{align*}
&\E^\mu\Big[ \frac{1}{1 + \sum_{i=0}^{n-1} f(Z_i)}\Big]\\
&\quad= \E^\mu\Big[ \frac{1}{1 + S_n(f)}\,;\, S_n(f) > (1-\alpha)n\pi(f)\Big] + \E^\mu\Big[ \frac{1}{1 + S_n(f)}\,;\, S_n(f) - n\pi(f) \leq -\alpha n\pi(f)\Big]\\
&\quad\leq \frac{1}{(1-\alpha) n\pi(f)} + \PP^\mu\big(S_n(f - \pi(f)) \leq - \alpha n\pi(f)\big)\\
&\quad\leq \Big\lVert \frac{\diff{\mu}}{\diff \pi} \Big \rVert_{L^p(\pi)}\Big(\frac{1}{(1-\alpha)n\pi(f)} + \exp\Big(- \frac{1}{\overbar{p}}\frac{\gamma_{\mathrm{ps}} (\alpha n\pi(f))^2}{16n\sigma^2(f) + 20n\alpha \lVert f -\pi(f) \rVert_\infty \pi(f) }\Big)\Big),
\end{align*}
where we used $S_n(f) \geq 0$ by $f \geq 0$ for the second line and \eqref{eq:bernstein} for the last line, taking into account $n \geq \gamma_{\mathrm{ps}}^{-1}$. Since $f \geq 0$, it holds 
\[\sigma^2(f) \leq \pi(f)\lVert f\rVert_\infty -\pi(f)^2 \leq \lVert f -\pi(f)\rVert_\infty \pi(f),\] 
whence 
\begin{align*} 
\exp\Big(- \frac{1}{\overbar{p}}\frac{\gamma_{\mathrm{ps}} (\alpha n\pi(f))^2}{16n\sigma^2(f) + 20n\alpha \lVert f -\pi(f)\rVert_\infty \pi(f) }\Big) &\leq \exp\Big(- \frac{1}{\overbar{p}}\frac{\gamma_{\mathrm{ps}} (\alpha n\pi(f))^2}{(16 +20\alpha) \lVert f -\pi(f)\rVert_\infty n\pi(f) }\Big) \\
&= \exp\Big(- \frac{1}{\overbar{p}}\frac{\gamma_{\mathrm{ps}} \alpha^2 n\pi(f)}{(16 +20\alpha) \lVert f -\pi(f) \rVert_\infty }\Big)\\ 
&\leq \frac{20\overbar{p}(1 + \alpha)\lVert f - \pi(f)\rVert_\infty}{\gamma_{\mathrm{ps}}\alpha^2 n\pi(f)}.
\end{align*}
Setting $c = 20 \overbar{p}\lVert f - \pi(f) \rVert_\infty/\gamma_{\mathrm{ps}}$, we have $(1 - \alpha)^{-1} = c(1+\alpha)\alpha^{-2}$ for $\alpha = \sqrt{c/(c+1)}$. Thus, for this choice of $\alpha$, we obtain from above 
\begin{align*}
\E^\mu\Big[ \frac{1}{1 + \sum_{i=0}^{n-1} f(Z_i)}\Big] \leq 2\Big\lVert \frac{\diff{\mu}}{\diff \pi} \Big \rVert_{L^p(\pi)} \frac{1}{(1-\sqrt{c/(c+1)})n\pi(f)} &= 2\Big\lVert \frac{\diff{\mu}}{\diff \pi} \Big \rVert_{L^p(\pi)} \frac{1+ \sqrt{c/(c+1)}}{(1- c/(c+1))n \pi(f)}\\
&\leq \frac{4 \big\lVert \frac{\diff{\mu}}{\diff \pi} \big\rVert_{L^p(\pi)} (c+1)}{n \pi(f)}\\
&= \frac{4\big\lVert \frac{\diff{\mu}}{\diff \pi} \big\rVert_{L^p(\pi)}(20\overbar{p}\lVert f -\pi(f) \rVert_\infty \gamma_{\mathrm{ps}}^{-1} +1)}{n\pi(f)}.
\end{align*}
\end{proof}

\section{Convergence analysis of the Nadaraya--Watson estimator under covariate shift} \label{sec:nw}
Let a $(P,Q)$-Markov covariate shift regression model be given. Denote $\mathcal{G}_n \coloneq \bigcup_{i=1}^n B(X_i,h_n)$ and consider the estimator 
\[\hat{f}_n(x) \coloneq \frac{\sum_{i=1}^n Y_i \one_{B(x,h_n)}(X_i)}{\sum_{i=1}^n \one_{B(x,h_n)}(X_i)} \one_{\mathcal{G}_n}(x), \quad x \in \mathcal{X},\]
for some $h_n > 0$, which is a Nadaraya--Watson (NW) type kernel estimator with uniform kernel $K = \one_{B(0,1)}$. We now prove an MSE upper bound for $\hat{f}_n$ under an Hölder continuity assumption on the regression function $f^\ast$ that provides a natural analogue to the case of i.i.d.\ covariates under covariate shift given in \cite[Theorem 1]{pathak22}. For $0< \beta \leq 1$ let 
\[\mathcal{H}(\beta,L) \coloneq \big\{f \colon \mathcal{X} \to \R: \lvert f(x) -f(y) \rvert \leq L \lvert d(x,y) \rvert^\beta \big\},\]
be the class of $(\beta,L)$-Hölder continuous functions on $\mathcal{X}$. Considering Hölder continuous regression functions  is natural by construction of the estimator with a kernel $K$ of order $1$, see \cite[Chapter 1]{tsy09}. We do not require $f^\ast$ to be bounded but only $f^\ast \in L^2(\pi^Q)$.

\begin{theorem} \label{theo:upper}
Suppose that $f^\ast \in \mathcal{H}(\beta,L) \cap L^2(\pi^Q)$. Then, for $n_P \geq (\gamma^P_{\mathrm{ps}})^{-1}, n_Q \geq (\gamma^Q_{\mathrm{ps}})^{-1}$, it holds 
\[\E\Big[\big\lVert \hat{f}_n - f^\ast \big\rVert_{L^2(\pi^Q)}^2 \Big] \leq  L^2h_n^{2\beta} + \mathfrak{C}  \frac{\pi^Q(\lvert f^\ast \rvert^2) + \sigma^2}{n} \rho_{h_n}(\mu_n,\pi^Q), \]
for 
\begin{align*}
\mathfrak{C} = \mathfrak{C}(\lambda^P_{\mathrm{ps}}, \lambda^Q_{\mathrm{ps}}, \mu^P,\mu^Q,\mathfrak{p},\mathfrak{q}) \coloneq  12 \max\Big\{&3 \Big\lVert\frac{\diff{\mu^P}}{\diff \pi^P}\Big\rVert_{L^{\mathfrak{p}}(\pi^P)}\Big\lVert\frac{\diff{\mu^Q}}{\diff \pi^Q}\Big\rVert_{L^{\mathfrak{q}}(\pi^Q)} \big(\overbar{\mathfrak{p}} (\gamma^P_{\mathrm{ps}})^{-1} \vee \overbar{\mathfrak{q}} (\gamma^Q_{\mathrm{ps}})^{-1}\big),\\
 & 28\Big(\Big\lVert\frac{\diff{\mu^P}}{\diff \pi^P}\Big\rVert_{L^{\mathfrak{p}}(\pi^P)}\overbar{\mathfrak{p}} (\gamma^P_{\mathrm{ps}})^{-1} \vee \Big\lVert\frac{\diff{\mu^Q}}{\diff \pi^Q}\Big\rVert_{L^{\mathfrak{q}}(\pi^Q)} \overbar{\mathfrak{q}}(\gamma^Q_{\mathrm{ps}})^{-1} \Big) \Big\}.
\end{align*}
\end{theorem}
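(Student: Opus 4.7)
The plan is a conditional bias--variance decomposition of the pointwise MSE, followed by integration against $\pi^Q$ and an application of Lemma~\ref{lem:negmom} to each of the two independent Markov samples. Conditioning on the joint design $\mathcal{F}^X \coloneq \sigma(X^P,X^Q)$, the noise assumptions give conditional independence of all $\xi_i$ with $\E[\xi_i\mid\mathcal{F}^X]=0$ and $\E[\xi_i^2\mid\mathcal{F}^X]\leq\sigma^2$, so the conditional MSE at $x$ splits into a conditional squared bias plus a conditional variance. On the event $\mathcal{G}_n(x)$, the conditional expectation of $\hat f_n(x)$ is a convex combination of values $f^\ast(X_i)$ with $X_i\in B(x,h_n)$, so $(\beta,L)$-Hölder continuity bounds the bias by $L h_n^\beta$ while the conditional variance is at most $\sigma^2/N_{h_n}(x)$. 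On $\mathcal{G}_n^c(x)$, $\hat f_n(x)=0$ and the squared error equals $f^\ast(x)^2$. Using $\one_{\mathcal{G}_n}/N_{h_n} \leq 2/(1+N_{h_n})$ and $\one_{\mathcal{G}_n^c} \leq 1/(1+N_{h_n})$, this reduces to the pointwise estimate
\[\E[(\hat f_n(x) - f^\ast(x))^2] \leq L^2 h_n^{2\beta} + (2\sigma^2 + f^\ast(x)^2)\,\E[1/(1+N_{h_n}(x))].\]

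The technical core is to bound $\E[1/(1+N_{h_n}(x))]$ uniformly in $x$ by a constant times $1/(n\mu_n(B(x,h_n)))$. Decomposing $N_{h_n}(x) = N^P_{h_n}(x) + N^Q_{h_n}(x)$ and using independence of the two chains together with Lemma~\ref{lem:negmom} applied to each with $f = \one_{B(x,h_n)}$, one obtains $\E[1/(1+N^R_{h_n}(x))] \leq C_R/(n_R\pi^R(B(x,h_n)))$ for $R \in \{P,Q\}$, with $C_R$ depending on $\|\diff\mu^R/\diff\pi^R\|_{L^{\mathfrak{r}}(\pi^R)}$ and $\overbar{\mathfrak{r}}/\gamma^R_{\mathrm{ps}}$; the hypothesis $n_R \geq 1/\gamma^R_{\mathrm{ps}}$ is used to simplify the Bernstein exponent. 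Since $N_{h_n} \geq \max(N^P_{h_n},N^Q_{h_n})$ and $\min(a/u,b/v) \leq (a+b)/(u+v)$, this yields $\E[1/(1+N_{h_n}(x))] \leq (C_P+C_Q)/(n\mu_n(B(x,h_n)))$, which, integrated against $\pi^Q$, supplies $\rho_{h_n}(\mu_n,\pi^Q)/n$ and produces the second case of $\mathfrak{C}$. For the $f^\ast(x)^2$-weighted boundary contribution a sharper product-structured estimate is obtained from the independence factorization $\PP(\mathcal{G}_n^c(x)) = \PP(N^P_{h_n}(x)=0)\PP(N^Q_{h_n}(x)=0)$: applying the Bernstein inequality~\eqref{eq:bernstein} to each factor, combining exponents via $\gamma^P n_P\pi^P(B(x,h_n))/\overbar{\mathfrak{p}} + \gamma^Q n_Q\pi^Q(B(x,h_n))/\overbar{\mathfrak{q}} \geq \min(\gamma^P/\overbar{\mathfrak{p}},\gamma^Q/\overbar{\mathfrak{q}})\cdot n\mu_n(B(x,h_n))$, and using $e^{-y} \leq 1/y$ produces precisely the product-of-norms constant $\|\diff\mu^P/\diff\pi^P\|_{L^{\mathfrak{p}}(\pi^P)}\|\diff\mu^Q/\diff\pi^Q\|_{L^{\mathfrak{q}}(\pi^Q)}$ appearing in the first case of $\mathfrak{C}$.

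The main obstacle I anticipate is assembling these pointwise estimates into the stated separable right-hand side $(\pi^Q(|f^\ast|^2) + \sigma^2)\rho_{h_n}(\mu_n,\pi^Q)/n$. The $\sigma^2$ portion integrates cleanly via the uniform bound on $\E[1/(1+N_{h_n})]$, but producing the coefficient $\pi^Q(|f^\ast|^2)$ for the $f^\ast(x)^2$-weighted contribution while retaining the factor $\rho_{h_n}(\mu_n,\pi^Q)/n$ requires carefully arbitrating between the single-chain bound and the product-of-chains bound depending on which dominates at a given $x$; this alternation is exactly what the $\max$ structure of $\mathfrak{C}$ encodes. Tracking the absolute constants through this arbitration so that the prefactors $3$, $28$, and the outer $12$ in $\mathfrak{C}$ emerge correctly, and verifying that the hypothesis $n_R \geq 1/\gamma^R_{\mathrm{ps}}$ is only invoked in the Bernstein exponent simplification, will be the most delicate bookkeeping step.
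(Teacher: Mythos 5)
Your proposal is correct and follows essentially the same route as the paper: a conditional bias--variance decomposition yielding the pointwise bound $L^2h_n^{2\beta}+\mathcal{I}_1^x+\mathcal{I}_2^x$, the product-of-Bernstein-bounds treatment of $\PP(\mathcal{G}_n^{\mathrm{c}}(x))$ via independence of the two chains followed by $\e^{-y}\leq 1/y$, Lemma~\ref{lem:negmom} applied to each chain for the reciprocal-count term, and Fubini--Tonelli to integrate against $\pi^Q$. The only cosmetic differences are your use of the mediant inequality where the paper uses $a\vee b\geq (a+b)/2$, and your framing of the final assembly as a pointwise ``arbitration'' where the paper simply uniformizes the two constants through the $\max$ defining $\mathfrak{C}$.
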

\begin{proof} 
By our assumptions on the Hölder regularity of $f^\ast$ and the noise vector $(\xi_i)_{i=1,\ldots,n}$, an adapted statement of Lemma 2 from \cite{pathak22} remains valid, that is, for $\overbar{f}_n(x) \coloneq \E[\hat{f}_n(x) \mid X_1,\ldots, X_n]$ and $\mathcal{G}_n \coloneq \bigcup_{i=1}^n B(X_i,h)$, we have the bounds
\begin{align} 
(\overbar{f}_n(x) - f^\ast(x))^2 &\leq \lvert f^\ast(x)\rvert^2 \one_{\mathcal{G}_n^{\mathrm{c}}}(x) + L^2 h^{2\beta}_n, \label{eq:bias}\\ 
\E\big[(\overbar{f}_n(x) - \hat{f}_n(x))^2 \mid X_1,\ldots,X_n \big] &\leq \frac{\sigma^2 \one_{\mathcal{G}_n}(x)}{\sum_{i=1}^n \one_{B(x,h_n)}(X_i)}.\label{eq:stoch}
\end{align}
Indeed, since $\E[\xi_i \mid X_1,\ldots,X_n] = 0$, we have 
\[\overbar{f}_n(x) = \frac{\sum_{i=1}^n f^\ast(X_i) \one_{B(x,h_n)}(X_i)}{\sum_{i=1}^n \one_{B(x,h_n)}(X_i)} \one_{\mathcal{G}_n}(x), \quad x \in \mathcal{X},\]
whence the conditional bias bound \eqref{eq:bias} follows exactly as in \cite{pathak22}  using  Hölder regularity of $f^\ast$ for $x \in \mathcal{G}_n$. Moreover,  using the assumptions on the noise vector $\xi$ we a.s.\ have, for any $i,j \in [n]$,
\[\E[\xi_i\xi_j \mid X_1,\ldots,X_n] = \E[\xi_i \mid X_1,\ldots,X_n] \E[\xi_j \mid X_1,\ldots,X_n]\one_{\{i \neq j\}} +  \E[\xi_i^2 \mid X_1,\ldots,X_n] \one_{\{i=j\}} \leq \sigma^2 \one_{\{i=j\}},\] 
whence inequality \eqref{eq:stoch} follows from observing that 
\[\E\big[(\overbar{f}_n(x) - \hat{f}_n(x))^2 \mid X_1,\ldots,X_n \big] = \frac{\one_{\mathcal{G}_n}(x)}{(\sum_{i=1}^n \one_{B(x,h_n)}(X_i))^2}\sum_{i,j = 1}^n \E[\xi_i\xi_j \mid X_1\ldots, X_n] \one_{B(x,h_n)}(X_i)\one_{B(x,h_n)}(X_j).\]
Now, since
\[\E[(\hat{f}_n(x)-\overbar{f}_n(x))(\overbar{f}_n(x) -f^\ast(x))] = \sum_{i=1}^n\E\Big[\frac{\E[\xi_i \mid X_1,\ldots,X_n] \one_{B(x,h_n)}(X_i)}{\sum_{j=1}^n \one_{B(x,h_n)}(X_j)}(\overbar{f}_n(x) - f^\ast(x)) \one_{\mathcal{G}_n}(x) \Big] = 0,\] 
we obtain the bias-variance type decomposition 
\[\E\big[(\hat{f}_n(x) - f^\ast(x))^2\big] = \E\big[(\overbar{f}_n(x) - f^\ast(x))^2\big] + \E\big[\E\big[(\overbar{f}_n(x) - \hat{f}_n(x))^2 \mid X_1,\ldots,X_n \big]\big],\]
and hence with \eqref{eq:bias}, \eqref{eq:stoch}, 
\begin{equation} \label{eq:biasvar}
\E\big[(\hat{f}_n(x) - f^\ast(x))^2\big] \leq L^2 h_n^{2\beta} + \lvert f^\ast(x) \rvert^2 \E\big[\one_{\mathcal{G}_n^{\mathrm{c}}}(x)\big] + \sigma^2 \E\Big[\frac{\one_{\mathcal{G}_n}(x)}{\sum_{i=1}^n \one_{B(x,h_n)}(X_i)} \Big] \eqcolon L^2 h_n^{2\beta} + \mathcal{I}_1^x + \mathcal{I}_2^x.
\end{equation}
To bound $\mathcal{I}_1^x$, first note that by independence of the Markov chains $X^Q$ and $X^P$, we have 
\[\E[\one_{\mathcal{G}_n^{\mathrm{c}}}(x)] = \PP\Big(\sum_{i=1}^{n_P} \one_{B(x,h_n)}(X_{i-1}^P) = 0 \Big) \PP\Big(\sum_{i=1}^{n_Q} \one_{B(x,h_n)}(X_{i-1}^Q) = 0 \Big).\]
Let $x \in \mathcal{X}$ s.t.\ $\pi^P(B(x,h_n)) > 0$. Using the Bernstein inequality \eqref{eq:bernstein} under the assumption $n_P \geq (\gamma^P_{\mathrm{ps}})^{-1}$ and the fact that $\PP_{X_0^{P}} = \mu^P$, we obtain 
\begin{align*} 
&\PP\Big(\sum_{i=1}^{n_P} \one_{B(x,h_n)}(X_{i-1}^P) = 0 \Big) \\
&\,\leq \PP\Big(\sum_{i=0}^{n_P-1} \big(\one_{B(x,h_n)}(X_i^P) - \pi^P(B(x,h_n)) \leq -n_P\pi^P(B(x,h_n)) \Big) \\ 
&\,\leq \Big\lVert \frac{\diff{\mu^P}}{\diff \pi^P}\Big\rVert_{L^{\mathfrak{p}}(\pi^P)}\exp\Big(- \frac{\gamma^P_{\mathrm{ps}} \overbar{\mathfrak{p}}^{-1}(n_P\pi^P(B(x,h_n)))^2}{16 n_P\pi^P(B(x,h_n))(1-\pi^P(B(x,h_n))) + 20 n_P\pi^P(B(x,h_n))} \Big)\\ 
&\,\leq \Big\lVert \frac{\diff{\mu^P}}{\diff \pi^P}\Big\rVert_{L^{\mathfrak{p}}(\pi^P)}\exp\Big(- \frac{n_P \pi^P(B(x,h_n))}{36 \overbar{\mathfrak{p}} (\gamma^P_{\mathrm{ps}})^{-1}} \Big).
\end{align*}
Noting that this bound is trivially true if $\pi(B(x,h_n)) = 0$ we obtain for any $x \in \mathcal{X}$, 
\[\PP\Big(\sum_{i=1}^{n_P} \one_{B(x,h_n)}(X_{i-1}^P) = 0 \Big) \leq \Big\lVert \frac{\diff{\mu^P}}{\diff \pi^P}\Big\rVert_{L^{\mathfrak{p}}(\pi^P)}\exp\Big(- \frac{n_P \pi^P(B(x,h_n))}{36 \overbar{\mathfrak{p}} (\gamma^P_{\mathrm{ps}})^{-1}} \Big).\]
Similarly, we obtain for any $x \in \mathcal{X}$,
\[\PP\Big(\sum_{i=1}^{n_Q} \one_{B(x,h_n)}(X_{i-1}^Q) = 0 \Big)\leq \Big\lVert \frac{\diff{\mu^Q}}{\diff \pi^Q}\Big\rVert_{L^{\mathfrak{q}}(\pi^Q)}\exp\Big(- \frac{n_Q \pi^Q(B(x,h_n))}{36 \overbar{\mathfrak{q}} (\gamma^Q_{\mathrm{ps}})^{-1}} \Big),\]
and hence 
\begin{equation}\label{eq:i1}
\begin{split}
\mathcal{I}_1^x &\leq \lvert f^\ast(x) \rvert^2\Big\lVert\frac{\diff{\mu^P}}{\diff \pi^P}\Big\rVert_{L^{\mathfrak{p}}(\pi^P)}\Big\lVert\frac{\diff{\mu^Q}}{\diff \pi^Q}\Big\rVert_{L^{\mathfrak{q}}(\pi^Q)} \exp\Big(- \frac{n_P \pi^P(B(x,h_n)) + n_Q \pi^Q(B(x,h_n))}{36 (\overbar{\mathfrak{p}} (\gamma^P_{\mathrm{ps}})^{-1} \vee \overbar{\mathfrak{q}} (\gamma^Q_{\mathrm{ps}})^{-1})} \Big)\\
& \leq \lvert f^\ast(x) \rvert^2\Big\lVert\frac{\diff{\mu^P}}{\diff \pi^P}\Big\rVert_{L^{\mathfrak{p}}(\pi^P)}\Big\lVert\frac{\diff{\mu^Q}}{\diff \pi^Q}\Big\rVert_{L^{\mathfrak{q}}(\pi^Q)} \frac{36 (\overbar{\mathfrak{p}} (\gamma^P_{\mathrm{ps}})^{-1} \vee \overbar{\mathfrak{q}} (\gamma^Q_{\mathrm{ps}})^{-1})}{n \mu_n(B(x,h_n))},
\end{split}
\end{equation}
follows for any $x \in \mathcal{X}$. Moreover, using Lemma \ref{lem:negmom}, we have
\begin{equation}\label{eq:i2}
\begin{split}
\mathcal{I}_2^x &\leq 2\sigma^2 \bigg(\E\bigg[\frac{1}{1 + \sum_{i=1}^{n_P} \one_{B(x,h_n)}(X_{i-1}^P)} \bigg] \wedge \E\bigg[\frac{1}{1 + \sum_{i=1}^{n_Q} \one_{B(x,h_n)}(X_{i-1}^Q)} \bigg] \bigg)\\
&\leq \frac{168\sigma^2 \Big(\overbar{\mathfrak{p}}\big\lVert\frac{\diff{\mu^P}}{\diff \pi^P}\big\rVert_{L^{\mathfrak{p}}(\pi^P)} (\gamma^P_{\mathrm{ps}})^{-1} \vee \overbar{\mathfrak{q}}\big\lVert\frac{\diff{\mu^Q}}{\diff \pi^Q}\big\rVert_{L^{\mathfrak{q}}(\pi^Q)} (\gamma^Q_{\mathrm{ps}})^{-1} \Big)}{n_P\pi^P(B(x,h_n)) \vee n_Q\pi^Q(B(x,h_n))}\\ 
&\leq \frac{336\sigma^2 \Big(\overbar{\mathfrak{p}}\big\lVert\frac{\diff{\mu^P}}{\diff \pi^P}\big\rVert_{L^{\mathfrak{p}}(\pi^P)} (\gamma^P_{\mathrm{ps}})^{-1} \vee \overbar{\mathfrak{q}}\big\lVert\frac{\diff{\mu^Q}}{\diff \pi^Q}\big\rVert_{L^{\mathfrak{q}}(\pi^Q)} (\gamma^Q_{\mathrm{ps}})^{-1} \Big)}{n_P\pi^P(B(x,h_n)) \vee n_Q\pi^Q(B(x,h_n))},\\ 
\end{split}
\end{equation}
where for the first inequality we used the elementary inequality 
\[(a+b)^{-1}\one_{\{a+b > 0\}} \leq 2 ((1+a)^{-1} \wedge (1+b)^{-1}), \quad a,b \in \N_0,\] 
see the proof of Lemma 6 in \cite{pathak22}. Consequently, using the bounds \eqref{eq:i1} and \eqref{eq:i2} in \eqref{eq:biasvar}, we obtain 
\[\E\big[(\hat{f}_n(x) - f^\ast(x))^2\big] \leq L^2 h_n^{2\beta} + \mathfrak{C}(\gamma^{P}_{\mathrm{ps}},\gamma^Q_{\mathrm{ps}}, \mu^P,\mu^Q,\mathfrak{p},\mathfrak{q}) \frac{\sigma^2 + \lvert f^\ast(x)\rvert^2}{n} \frac{1}{\mu_n(B(x,h_n))}, \quad x \in \mathcal{X}, \]
from which the assertion follows by the Fubini--Tonelli theorem.
\end{proof}
\begin{remark} \label{rem:rate}
In the case without covariate shift, i.e., $P = Q$, $n_Q = 0$, following the lines of the proof we obtain the bound 
\[\E\Big[\big\lVert \hat{f}_n - f^\ast \big\rVert_{L^2(\pi^P)}^2 \Big] \leq  L^2h_n^{2\beta} + \mathfrak{C}^\prime  \frac{\pi^P(\lvert f^\ast \rvert^2) + \sigma^2}{n} \rho_{h_n}(\pi^P,\pi^P),\] 
for an explicit constant $\mathfrak{C}^\prime$ only depending on $\gamma_{\mathrm{ps}}^P, \mu^P,\mathfrak{p}$. If $\mathcal{X} \Subset \R^{\mathsf{d}}$ with diameter $D > 0$, then choosing $h_n = n^{-1/(2\beta+\mathsf{d})}$ we obtain 
\[\E\Big[\big\lVert \hat{f}_n - f^\ast \big\rVert_{L^2(\pi^P)}^2 \Big] \lesssim n^{-\frac{2\beta}{2\beta + \mathsf{d}}},\] 
since by \cite[Proposition 1]{pathak22} and Lemma \ref{lem:cov}, $\rho_{h_n}(\pi^P,\pi^P) \leq N_{\mathcal{X}}(h_n/2,d) \leq (1+ 4D/h_n)^{\mathsf{d}}$. Thus, without covariate shift, the usual minimax nonparametric regression rate for i.i.d.\ data is achieved for Markovian design with pseudo spectral gap.
\end{remark}

Theorem \ref{theo:upper} allows us to also study the prediction error 
\[\E\big[\big(\hat{f}_n(X^Q_{n_Q+m}) - f^\ast(X^Q_{n_Q + m})\big)^2\big], \quad m\in \N_0,\] 
i.e., the expected squared error at a location $X^Q_{n_Q+m}$ from the same Markov chain underlying the target data sample $\mathcal{S}_Q$, in terms of the mixing behavior of the chain $X^Q$. For the next statement, we suppose that for $R \in \{P,Q\}$, $\xi_i^R = \varsigma^R_i(X_i^R) \varepsilon^R_i$, where $\varsigma^R_i\colon \cX \to [-\sigma,\sigma]$ and the random vector $(\varepsilon^R_i, R \in \{P,Q\}, i \in \N_0)$ has independent components, is independent of $(X^P,X^Q)$ and $\E[\varepsilon_i^R] = 0, \E[(\varepsilon^R_i)^2] \leq 1$.

\begin{theorem} \label{theo:pred}
Assume the above conditions on the noise $(\xi^P,\xi^Q)$ and assume that $f^\ast \in \mathcal{H}(\beta,L)$  and $n_P \geq (\gamma^P_{\mathrm{ps}})^{-1}, n_Q \geq (\gamma^Q_{\mathrm{ps}})^{-1}$.
\begin{enumerate}[label = (\roman*), ref =(\roman*)]
\item \label{pred_psi} Suppose that $f^\ast \in L^2(\pi^Q)$ and that for some $m \in \N$ and all $x \in \cX$, $Q^m(x,\cdot) \ll \pi^Q$ with Radon--Nikodym derivative satisfying 
\begin{equation} \label{eq:rn}
\sup_{x \in \cX} \Big\lVert \frac{\diff{Q^m(x,\cdot)}}{\diff{\pi^Q}} \Big\rVert_{L^\infty(\pi^Q)} \leq M,
\end{equation}
for some $M > 0$. Then,  
\begin{align*}
\E\big[\big(\hat{f}_n(X^Q_{n_Q+m-1}) - f^\ast(X^Q_{n_Q + m-1})\big)^2\big] &\leq  M \E\Big[\big\lVert \hat{f}_n - f^\ast \big\rVert_{L^2(\pi^Q)}^2 \Big] \\
&\leq ML^2h_n^{2\beta} + M\mathfrak{C}  \frac{\pi^Q(\lvert f^\ast \rvert^2) + \sigma^2}{n} \rho_{h_n}(\mu_n,\pi^Q).
\end{align*}
\item \label{pred_uni} Suppose that $f^\ast$ is bounded and that for $R \in \{P,Q\}$, the random vectors $(\varepsilon^R_i)_{i \in \N_0}$ are uniformly sub-Gaussian, i.e., for some $\zeta_R > 0$ and $t \geq 0$, $\PP(\lvert \xi_i^R \rvert \geq t) \leq 2\exp(-t^2/\zeta_R^2)$. Also assume that $X^Q$ is geometrically ergodic, i.e., there exists constants $c_Q > 0, \kappa_Q \in (0,1)$ and a function $V\colon \cX \to \R_+$ such that 
\[\lVert Q^n(x,\cdot) - \pi^Q \rVert_{\mathrm{TV}} \leq c_Q V(x)\kappa_Q^n, \quad x \in \cX, n \in \N_0,\] 
and, moreover, $c^\prime_Q \coloneq \sup_{n \in \N} \E^{\mu^Q}[V(X^Q_n)] < \infty$.
Then, for some global constant $C > 0$ and $C_Q = c_Q c_Q^\prime$, it holds for any $m \in \N$,
\begin{align*}
&\E\big[\big(\hat{f}_n(X^Q_{n_Q+m-1}) - f^\ast(X^Q_{n_Q + m-1})\big)^2\big]\\
&\,\leq \E\Big[\big\lVert \hat{f}_n - f^\ast \big\rVert_{L^2(\pi^Q)}^2 \Big] +  4c_Q \kappa_Q^m \big(\lVert f^\ast \rVert_\infty^2 + \sigma^2 (\zeta_P \vee \zeta_Q)^2(1 + \log n_P \vee n_Q) \big) \\
&\,\leq L^2h_n^{2\beta} + \mathfrak{C}  \frac{\lVert f^\ast \rVert^2_\infty + \sigma^2}{n} \rho_{h_n}(\mu_n,\pi^Q) + 4C_Q \kappa_Q^m \big(\lVert f^\ast \rVert_\infty^2 + 2\sigma^2 C(\zeta_P \vee \zeta_Q)^2(1 + \log n_P \vee n_Q) \big).
\end{align*}
\end{enumerate}
\end{theorem}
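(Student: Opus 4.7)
The plan is to condition on the training $\sigma$-algebra $\cF_n \coloneq \sigma(\mathcal{S}_P, \mathcal{S}_Q, (\xi_i)_{i \in [n]})$ and use the Markov property of $X^Q$ together with the independence of $X^P$ and $X^Q$. Since $\hat{f}_n$ is $\cF_n$-measurable and $X^Q_{n_Q+m-1}$ lies $m$ steps ahead of the last training observation $X^Q_{n_Q-1}$, the Markov property gives
\[
\E\big[(\hat{f}_n(X^Q_{n_Q+m-1}) - f^\ast(X^Q_{n_Q+m-1}))^2 \mid \cF_n\big] = \int_{\cX} g_n(y) \, Q^m(X^Q_{n_Q-1}, \diff y),
\]
where $g_n(x) \coloneq (\hat{f}_n(x) - f^\ast(x))^2$. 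For part \ref{pred_psi}, the uniform Radon--Nikodym bound \eqref{eq:rn} immediately yields $\int g_n \diff Q^m(X^Q_{n_Q-1},\cdot) \leq M \lVert \hat{f}_n - f^\ast \rVert^2_{L^2(\pi^Q)}$ pointwise; taking expectations and invoking Theorem \ref{theo:upper} closes (\ref{pred_psi}).

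For part \ref{pred_uni}, the absence of a uniform Radon--Nikodym bound forces us through the TV distance: subtracting $\int g_n \diff \pi^Q$ and bounding by the sup-norm yields
\[
\int g_n \diff Q^m(X^Q_{n_Q-1},\cdot) \leq \lVert \hat{f}_n - f^\ast \rVert^2_{L^2(\pi^Q)} + c_Q \kappa_Q^m V(X^Q_{n_Q-1}) \lVert g_n \rVert_\infty,
\]
so the task reduces to controlling $\E[\lVert g_n \rVert_\infty V(X^Q_{n_Q-1})]$. Since $\hat{f}_n(x)$ is a convex combination of the $Y_i$'s on $\mathcal{G}_n$ and vanishes outside, one has $\lVert \hat{f}_n - f^\ast \rVert_\infty \leq 2 \lVert f^\ast \rVert_\infty + \max_{i \in [n]}\lvert \xi_i \rvert$, hence $\lVert g_n \rVert_\infty \leq 8 \lVert f^\ast \rVert_\infty^2 + 2 \max_{i \in [n]} \xi_i^2$.

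The main obstacle is decoupling this sup-norm bound from $V(X^Q_{n_Q-1})$, as only the $L^1$-type bound $\E^{\mu^Q}[V(X^Q_n)] \leq c_Q'$ is available and a naive Cauchy--Schwarz is therefore unavailable. The resolution is the multiplicative noise structure $\xi_i^R = \varsigma_i^R(X_i^R) \varepsilon_i^R$ with $\lvert \varsigma_i^R \rvert \leq \sigma$ and the full independence of $(\varepsilon_i^P, \varepsilon_i^Q)_{i \in \N_0}$ from $(X^P,X^Q)$: this yields $\max_i \xi_i^2 \leq \sigma^2 \max_{R \in \{P,Q\}} \max_{i \in [n_R]} (\varepsilon_i^R)^2$ with this maximum independent of $V(X^Q_{n_Q-1})$, so that the relevant expectations factor cleanly. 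A standard sub-Gaussian maximal inequality $\E[\max_{i \in [n_R]} (\varepsilon_i^R)^2] \leq C \zeta_R^2 (1 + \log n_R)$, combined with $\E[V(X^Q_{n_Q-1})] \leq c_Q'$, produces the constant $C_Q = c_Q c_Q'$ in front of $\kappa_Q^m$; plugging in the $L^2(\pi^Q)$-risk bound from Theorem \ref{theo:upper} then delivers the final explicit estimate.
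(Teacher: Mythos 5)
Your proposal is correct and follows essentially the same route as the paper: the Markov property plus independence of the innovations from the chains reduces the prediction error to an integral against $Q^m(X^Q_{n_Q-1},\cdot)$, part \ref{pred_psi} follows from the Radon--Nikodym bound, and part \ref{pred_uni} from the total-variation comparison with $\pi^Q$, the uniform bound $\lVert \hat{f}_n - f^\ast\rVert_\infty \lesssim \lVert f^\ast\rVert_\infty + \sigma \max_i \lvert\varepsilon_i\rvert$, the decoupling of $\max_i \varepsilon_i^2$ from $V(X^Q_{n_Q-1})$ via independence, and the sub-Gaussian maximal inequality --- exactly the ingredients of the paper's argument, which merely organizes the conditioning the other way around (freezing the innovations and integrating over the chain paths rather than conditioning on the training $\sigma$-algebra). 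The only discrepancies are immaterial numerical constants in the bound $\lVert g_n\rVert_\infty \leq 8\lVert f^\ast\rVert_\infty^2 + 2\sigma^2\max_i \varepsilon_i^2$ versus the paper's $4\lVert f^\ast\rVert_\infty^2 + 4\sigma^2\max_i \varepsilon_i^2$.
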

\begin{proof} 
Let $\hat{f}_n(x) \eqcolon h(x,X_1,\ldots,X_n,\xi_1,\ldots,\xi_n)$, $\varepsilon = (\varepsilon_1,\ldots,\varepsilon_n) \coloneq (\varepsilon_0^P,\ldots,\varepsilon_{n_P-1}^P,\varepsilon_0^Q,\ldots,\varepsilon_{n_Q-1}^Q)$ and denote $(\varsigma_1,\ldots,\varsigma_n) \coloneq (\varsigma^P_0,\ldots,\varsigma^P_{n_P-1},\varsigma^Q_0,\ldots,\varsigma^Q_{n_Q-1})$. Then, by independence of $\varepsilon$ and $(X_1,\ldots,X_n)$, conditioning on $\varepsilon$ yields 
\begin{equation}\label{eq:pred1}
\begin{split}
&\E\big[\big(\hat{f}_n(X^Q_{n_Q+m}) - f^\ast(X^Q_{n_Q + m})\big)^2\big] \\ 
&\, = \E\Big[\E\big[\big(h(X^Q_{n_Q+m},X_1,\ldots,X_n,\varsigma(X_1)y_1,\ldots,\varsigma_n(X_n)y_n)- f^\ast(X^Q_{n_Q + m})\big)^2\big]\big\vert_{(\varepsilon_1,\ldots,\varepsilon_n) =(y_1,\ldots,y_n)} \Big].
\end{split}
\end{equation}
By definition of $(X_1,\ldots,X_n)$ we obtain for 
\[K(\diff{x_0} \times \cdots \times \diff{x_{n_Q+n_P-1}}) = \mu^P(\diff{x_0}) \prod_{i=1}^{n_P-1} P(x_{i-1},\diff{x_i}) \mu^Q(\diff{x_{n_P}}) \prod_{i=1}^{n_Q-1} Q(x_{i - 1 + n_P},\diff{x_{i + n_P}}),\] 
that
\begin{align*} 
&\E\big[\big(h(X^Q_{n_Q+m-1},X_1,\ldots,X_n,y_1,\ldots,y_n)- f^\ast(X^Q_{n_Q + m-1})\big)^2\big]\\
&\,= \int_{\cX^{n+1}} K(\diff{x_0} \times \cdots \times \diff{x_{n-1}}) Q^m(x_{n-1}, \diff{x})\,  \big(h(x,x_0,\ldots,x_{n-1},\varsigma_1(x_0)y_1,\ldots,\varsigma_n(x_{n-1})y_n) - f^\ast(x)\big)^2.
\end{align*} 
For case \ref{pred_psi}, \eqref{eq:rn} therefore yields
\begin{align*}
&\E\big[\big(h(X^Q_{n_Q+m-1},X_1,\ldots,X_n,y_1,\ldots,y_n)- f^\ast(X^Q_{n_Q + m-1})\big)^2\big]\\
&\,\leq M \int_{\cX^{n+1}} K(\diff{x_0} \times \cdots \times \diff{x_{n-1}}) \pi^Q(\diff{x})\,  \big(h(x,x_0,\ldots,x_{n-1},\varsigma_1(x_0)y_1,\ldots,\varsigma_n(x_{n-1})y_n) -f^\ast(x)\big)^2 \\ 
&\,= M \int_{\cX} \E\big[\big(h(x,X_1,\ldots,X_n,\varsigma_1(X_1)y_1,\ldots,\varsigma_n(X_n)y_n)- f^\ast(x)\big)^2\big] \, \pi^Q(\diff{x}).
\end{align*}
Thus, by \eqref{eq:pred1}, using once more independence of $\varepsilon$ and $(X_1,\ldots,X_n)$, 
\[\E\big[\big(\hat{f}_n(X^Q_{n_Q+m-1}) - f^\ast(X^Q_{n_Q + m-1})\big)^2\big] \leq M \E\Big[\big\lVert \hat{f}_n - f^\ast \big\rVert_{L^2(\pi^Q)}^2 \Big].\] 
Theorem \ref{theo:upper} now yields statement \ref{pred_psi}.

In case \ref{pred_uni}, first note that 
\begin{align*}
&\sup_{x \in \cX} \lvert h(x,,x_0,\ldots,x_{n-1}, \varsigma_1(x_0)y_1,\ldots,\varsigma_n(x_{n-1})y_n) - f^\ast(x)\rvert \\
&\,= \sup_{x \in \cX} \Big\lvert  \frac{\sum_{i=1}^n (f(x_{i-1}) + \varsigma_i(x_{i-1}) y_i) \one_{B(x,h_n)}(x_{i-1})}{\sum_{i=1}^n \one_{B(x,h_n)}(x_{i-1})} \one_{(0,\infty)}\Big(\sum_{i=1}^n \one_{B(x,h_n)}(x_{i-1}) \Big) - f^\ast(x) \Big\rvert^2\\
&\,\leq 4\lVert f^\ast \rVert_\infty^2 + 4\sigma^2 \max_{i=1,\ldots,n} y_i^2.
\end{align*}
Thus, 
\begin{align*} 
&g(y_1,\ldots,y_n) \coloneq \\
&\,\Big\lvert \int_{\cX^{n+1}} K(\diff{x_0} \times \cdots \times \diff{x_{n-1}}) Q^m(x_{n-1}, \diff{x})\,  \big(h(x,x_0,\ldots,x_{n-1},\varsigma_1(x_0)y_1,\ldots,\varsigma_n(x_{n-1})y_n) - f^\ast(x)\big)^2 \\ 
&\quad - \int_{\cX^{n+1}} K(\diff{x_0} \times \cdots \times \diff{x_{n-1}}) \pi^Q(\diff{x})\,  \big(h(x,x_0,\ldots,x_{n-1},\varsigma_1(x_0)y_1,\ldots,\varsigma_n(x_{n-1})y_n) - f^\ast(x)\big)^2 \Big\rvert \\
&\,\leq \int_{\cX^n} K(\diff{x_0} \times \cdots \times \diff{x_{n-1}}) \\
&\qquad  \Big\lvert \int_{\cX} (Q^m(x_{n-1}, \diff{x}) - \pi^Q(\diff{x}))\,  \big(h(x,x_0,\ldots,x_{n-1},\varsigma_1(x_0)y_1,\ldots,\varsigma_n(x_{n-1})y_n) - f^\ast(x)\big)^2 \Big\rvert \\
&\,\leq 4\big(\lVert f^\ast \rVert_\infty^2 + \sigma^2 \max_{i=1,\ldots,n} y_i^2\big) \int_{\cX^n} K(\diff{x_0} \times \cdots \times \diff{x_{n-1}}) \, \big\lVert Q^m(x_{n-1},\cdot) - \pi^Q \big\rVert_{\mathrm{TV}}\\
&\,\leq 4c_Q \kappa_Q^m \big(\lVert f^\ast \rVert_\infty^2 + \sigma^2 \max_{i=1,\ldots,n} y_i^2\big) \int_{\cX^{n}} K(\diff{x_0} \times \cdots \times \diff{x_{n-1}})\, V(x_{n-1})\\
&\, = 4c_Q \E^{\mu^Q}[V(X^Q_{n_Q})]\kappa_Q^m \big(\lVert f^\ast \rVert_\infty^2 + \sigma^2 \max_{i=1,\ldots,n} y_i^2\big)\\
&\leq 4C_Q \kappa_Q^m \big(\lVert f^\ast \rVert_\infty^2 + \sigma^2 \max_{i=1,\ldots,n} y_i^2\big),
\end{align*}
and we obtain by conditioning on $(\varepsilon_1,\ldots,\varepsilon_n)$ and triangle inequality
\begin{equation}\label{eq:pred_uni}
\begin{split} 
\Big\lvert \E\big[\big(\hat{f}_n(X^Q_{n_Q+m}) - f^\ast(X^Q_{n_Q + m})\big)^2\big] - \E\Big[\big\lVert \hat{f}_n - f^\ast \big\rVert_{L^2(\pi^Q)}^2 \Big] \Big\rvert &\leq \E[g(\varepsilon_1,\ldots,\varepsilon_n)]\\
&\leq 4C_Q \kappa_Q^m \big(\lVert f^\ast \rVert_\infty^2 + \sigma^2 \E\big[\max_{i=1,\ldots,n} \lvert \varepsilon_i\rvert ^2 \big]\big).
\end{split}
\end{equation}
Since the elements of $\varepsilon^P$ are uniformly sub-Gaussian, we have 
\[\PP(\lvert \varepsilon_i^P\rvert^2 \geq t) = \PP(\lvert \varepsilon_i^P\rvert \geq \sqrt{t}) \leq 2\exp(-t/\zeta^2_P), \quad t \geq 0,\] 
whence $(\lvert \varepsilon^P_i \rvert^2)_{i \in \N_0}$ is uniformly sub-exponential and \cite[Proposition 2.7.1]{vershy18} gives 
\[\E[\exp(\lambda \lvert \varepsilon_i^P \rvert^2)] \leq \exp(C\zeta^2_P\lambda), \quad \lambda \leq 1/(C\zeta^2_P),\] 
for some global constant $C > 0$. Therefore, a union bound gives 
\[\E\Big[\exp\big((C\zeta_P^2)^{-1} \max_{i=1,\ldots,n_P}\lvert \varepsilon_i^P \rvert^2\big) \Big] \leq n_P\E\big[\exp((C\zeta_P^2)^{-1}\lvert \varepsilon^P_1 \rvert^2)\big] \leq \mathrm{e} n_P,\] 
such that Jensen's inequality implies 
\[\E\big[\max_{i=0,\ldots,n_P-1} \lvert\varepsilon^P_i\rvert^2\big] \leq C\zeta_P^2(1 + \log n_P).\]
Similarly, we obtain 
\[\E\big[\max_{i=0,\ldots,n_Q-1} \lvert\varepsilon^Q_i\rvert^2\big] \leq C\zeta_Q^2(1 + \log n_Q),\]
whence \eqref{eq:pred_uni} yields 
\[\Big\lvert \E\big[\big(\hat{f}_n(X^Q_{n_Q+m}) - f^\ast(X^Q_{n_Q + m})\big)^2\big] - \E\Big[\big\lVert \hat{f}_n - f^\ast \big\rVert_{L^2(\pi^Q)}^2 \Big] \Big\rvert \leq 4C_Q \kappa_Q^m \big(\lVert f^\ast \rVert_\infty^2 + 2\sigma^2 C(\zeta_P \vee \zeta_Q)^2(1 + \log n_P \vee n_Q) \big).\]
Statement \ref{pred_uni} now follows by triangle inequality and Theorem \ref{theo:upper}.
\end{proof}
\begin{remark} 
\begin{enumerate}[label = (\alph*), ref =(\alph*)]
\item Condition \eqref{eq:rn} represents an especially pronounced  mixing condition on the chain $X^Q$. 
A sufficient condition for \eqref{eq:rn} to hold is given by 
\begin{equation}\label{eq:rn2}
\forall x,x^\prime \in \cX, A \in \mathcal{B}(\cX): Q^m(x,A) \leq MQ^m(x^\prime,A),
\end{equation} 
since this implies $M\pi^Q(A) = M \int_{\cX} Q^m(x^\prime,A)\, \pi^Q(\diff{x^\prime}) \geq Q^m(x,A)$ for any $x \in \cX$.
Condition \eqref{eq:rn2} appears, e.g., in \cite{ellis88} and implies $\psi$-mixing of the chain $X^Q$, cf.\ \cite[Remark 1.4]{bradley97}.
\item In case \ref{pred_uni}, condition $\sup_{n \in \N} \E^{\mu^Q}[V(X_n^Q)] < \infty$ is trivially satisfied for any initial distribution $\mu^Q$ when $X^Q$ is uniformly ergodic, i.e., $\lVert V \rVert_\infty < \infty$. Moreover, it also holds for $V$-uniformly ergodic chains, i.e., chains satisfying
\[\sup_{\lvert f \rvert \leq V} \lvert \E^x[f(X^Q_n)] - \mu^Q(f) \rvert \lesssim V(x) \kappa_Q^n, \quad x \in \cX, n \in \N_0,\] 
and initial distributions $\mu^Q \ll \pi^Q$ with $\diff{\mu^Q}/\diff{\pi^Q} \in L^\infty(\pi^Q)$, cf.\ \cite[Chapter 16]{mt09}.
\end{enumerate}
\end{remark}

\section{Explicit rates}\label{sec:explicit}
We now discuss specific models for which the dissimilarity $\rho_{h_n}(\mu_n,\pi^Q)$ and hence the convergence rate of the NW estimator can be explicitly controlled based on the topology of the state space.

\subsection{Finite Markov chains}
Let us first consider the simplest case when $X^P,X^Q$ are finite Markov chains, i.e., $\lvert \mathcal{X} \rvert \eqcolon K \in \N$ with $\mathcal{X} = \{x_1,\ldots,x_K\}$. Assume that the transition matrices $P = (p_{i,j})_{i,j \in [K]},Q = (q_{i,j})_{i,j \in [K]}$ are aperiodic and irreducible and let $\bm{\pi}^P =(\pi^P_i)_{i \in [K]},\bm{\pi}^Q =(\pi^Q_i)_{i \in [K]}$ be the invariant distribution vectors for $P$ and $Q$ respectively. For simplicity, let us consider the case when $P,Q$ are reversible, i.e., $\pi^P_i p_{i,j} = \pi^P_i p_{j,i}$ and $\pi^Q_i q_{i,j} = \pi^Q_i q_{j,i}$ for all $i,j \in [K]$. The following discussion can be easily extended to the non-reversible case by considering the multiplicative symmetrizations $PP^\ast, QQ^\ast$ instead, provided that these matrices are irreducible. Given reversibility, all eigenvalues of $P,Q$ are real and since both matrices are irreducible and aperiodic, the Perron--Frobenius theorem implies that the second largest absolute eigenvalues $\lambda(P) = \vertiii{ P - \Pi^P}_{\pi^P}, \lambda(Q) = \vertiii{Q - \Pi^Q}_{\pi^Q}$ are strictly dominated by the largest eigenvalue $\lambda = 1$. Thus, $P$ and $Q$ have absolute spectral gaps $1- \lambda(P) > 0$ and $1 - \lambda(Q) > 0$ and therefore  pseudo spectral gaps $\gamma^P_{\mathrm{ps}} \geq 1 - \lambda(P)^2, \gamma^Q_{\mathrm{ps}} \geq 1 - \lambda(Q)^2$, respectively (see \eqref{eq:abs_pseudo}).

Since $\mathcal{X}$ is finite, any  function $g\colon \cX \to \R$ is $\overbar{L}$-Lipschitz continuous with  Lipschitz constant $\overbar{L} \coloneq \max_{i,j \in [K]} \lvert g(x_i) - g(x_j) \rvert/(\min_{i,j \in [K]^2, i \neq j} d(x_i,x_j))$ and the nonparametric regression problem effectively becomes a parametric one since $f^\ast = \sum_{i=1}^K a_i \one_{\{x_i\}}$ with $a_i = f^\ast(x_i)$. Correspondingly, for any $h_n < \min_{i,j\in [K]^2, i \neq j} d(x_i,x_j) \eqcolon \delta$, the NW estimator $\hat{f}_n$ is a simple weighted empirical  estimator given by 
\[\hat{f}_n(x) \coloneq \frac{\sum_{i=1}^n Y_i \one_{\{X_i = x\}}}{\#\{i \in [n]: X_i = x\}} \one_{\{\#\{i \in [n]: X_i = x\} > 0\}}, \quad x \in \mathcal{X}.\]
In this simple setting, our previous results imply that even under covariate shift we always recover a parametric convergence rate $\mathfrak{c}/\sqrt{n}$ for the generalization risk, where the deviation of $\pi^P$ from the target distribution $\pi^Q$ is only represented in the multiplicative constant $\mathfrak{c}$ in terms of the maximum of the Radon--Nikodym derivative $\diff \pi^Q/\diff{\pi^P}$. Let 
\[n_{\mathrm{eff}} \coloneq \frac{n_P}{\max_{i \in [K]} \pi^Q_i/\pi^P_i} + n_Q\] 
be the \textit{effective sample size} of the covariate shift model.

\begin{proposition} 
Let $X^P,X^Q$ be the independent finite Markov chains from above with initial distributions $\mu^P$ and $\mu^Q$, resp., and denote $\kappa(P) \coloneq \max_{i \in [K]} \tfrac{\mu^P_i}{\pi^P_i}$, $\kappa(Q) \coloneq \max_{i \in [K]} \tfrac{\mu^Q_i}{\pi^Q_i}$. Assume $n_P \geq (1 - \lambda(P)^2)^{-1}, n_Q \geq (1- \lambda(Q)^2)^{-1}$. Then, for 
\begin{align*} 
\mathfrak{C}^\prime(P,Q,\mu^P,\mu^Q) \coloneq  12 \max\big\{&3 \kappa(P)\kappa(Q) (1 - (\lambda(P) \vee \lambda(Q))^2)^{-1}, \\
&28(\kappa(P)(1-\lambda(P)^2)^{-1} \vee \kappa(Q)(1-\lambda(Q)^2)^{-1})\big\},
\end{align*}
and the choice $h_n = c(n_{\mathrm{eff}}^{-1/2}\wedge \delta)$ for some $c \in (0,1)$, 
we have the following bound for the $(P,Q)$-covariate shift model:
\[\E\Big[\big\lVert \hat{f}_n - f^\ast \big\rVert_{L^2(\pi^Q)}^2 \Big] \leq \frac{c^2\overbar{L}^2 + K \max_{i \in [K]} \frac{\pi^Q_i}{\pi^P_i} \mathfrak{C}^\prime(P,Q,\mu^P,\mu^Q)(\lVert f^\ast \rVert_\infty^2 + \sigma^2)}{n_{\mathrm{eff}}}.\] 
\end{proposition}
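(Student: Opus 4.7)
My plan is to reduce this to a direct application of Theorem~\ref{theo:upper} and then bound the dissimilarity $\rho_{h_n}(\mu_n,\pi^Q)$ by a constant multiple of $n/n_{\mathrm{eff}}$. The hypotheses of Theorem~\ref{theo:upper} are straightforward to check: by definition of $\overbar{L}$ and the discrete metric structure, $f^\ast \in \mathcal{H}(1,\overbar{L})$ and $\pi^Q(\lvert f^\ast\rvert^2) \leq \lVert f^\ast\rVert_\infty^2$; the choice $\mathfrak{p} = \mathfrak{q} = \infty$ gives $\lVert \diff \mu^R/\diff \pi^R\rVert_{L^\infty(\pi^R)} = \kappa(R)$ and $\overbar{\mathfrak{p}} = \overbar{\mathfrak{q}} = 1$; and the pseudo spectral gap bound \eqref{eq:abs_pseudo} combined with reversibility gives $\gamma^R_{\mathrm{ps}} \geq 1 - \lambda(R)^2$, so the sample-size assumption $n_R \geq (1 - \lambda(R)^2)^{-1}$ implies $n_R \geq (\gamma^R_{\mathrm{ps}})^{-1}$. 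These choices simultaneously identify the general constant $\mathfrak{C}$ of Theorem~\ref{theo:upper} as being dominated by $\mathfrak{C}'$. The bandwidth $h_n = c(n_{\mathrm{eff}}^{-1/2}\wedge \delta)$ with $c \in (0,1)$ satisfies $h_n < \delta$, forcing every ball $B(x_i,h_n)$ to collapse to the singleton $\{x_i\}$, and at the same time yields the bias control $L^2 h_n^2 \leq c^2 \overbar{L}^2/n_{\mathrm{eff}}$.

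The crux of the proof is the dissimilarity bound. After the singleton reduction, $\mu_n(\{x_i\}) = (n_P \pi^P_i + n_Q \pi^Q_i)/n$, whence
\[\rho_{h_n}(\mu_n,\pi^Q) = \sum_{i=1}^K \frac{n\pi^Q_i}{n_P \pi^P_i + n_Q \pi^Q_i}.\]
The key observation linking this quantity to $n_{\mathrm{eff}}$ is that $\pi^Q_i/\max_j(\pi^Q_j/\pi^P_j) \leq \pi^P_i$ for every $i$, so that
\[n_{\mathrm{eff}} \pi^Q_i = \frac{n_P \pi^Q_i}{\max_j (\pi^Q_j/\pi^P_j)} + n_Q \pi^Q_i \leq n_P \pi^P_i + n_Q \pi^Q_i.\]
This immediately produces $\rho_{h_n}(\mu_n,\pi^Q) \leq K n/n_{\mathrm{eff}}$, which I then rewrite as $\rho_{h_n}(\mu_n,\pi^Q) \leq K\max_i(\pi^Q_i/\pi^P_i)\cdot n/n_{\mathrm{eff}}$ by using that $\max_i(\pi^Q_i/\pi^P_i) \geq 1$ (both $\pi^P$ and $\pi^Q$ being probability vectors) to match the exact form of the claim. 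Substituting this bound together with the bias control and $\mathfrak{C} \leq \mathfrak{C}'$ into Theorem~\ref{theo:upper} and collecting the $1/n_{\mathrm{eff}}$-terms then reproduces the stated inequality.

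I do not expect any genuine obstacle: the argument is essentially careful bookkeeping around Theorem~\ref{theo:upper}. The only mildly delicate point is the algebraic identification that shows why $n_{\mathrm{eff}}$ is the correct scaling---once one recognizes that $\pi^Q_i/\max_j(\pi^Q_j/\pi^P_j)\leq \pi^P_i$, the rest falls out, and this is precisely what justifies calling $n_{\mathrm{eff}}$ the effective sample size in this parametric-looking regime.
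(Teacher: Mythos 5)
Your proposal is correct and follows essentially the same route as the paper: invoke Theorem \ref{theo:upper} with $\mathfrak{p}=\mathfrak{q}=\infty$ and $\gamma^R_{\mathrm{ps}}\geq 1-\lambda(R)^2$ to identify $\mathfrak{C}\leq\mathfrak{C}'$, use $h_n<\delta$ to collapse balls to singletons, and bound $\rho_{h_n}(\mu_n,\pi^Q)\leq Kn/n_{\mathrm{eff}}$ via exactly the inequality $n_{\mathrm{eff}}\,\pi^Q_i\leq n_P\pi^P_i+n_Q\pi^Q_i$ (the paper phrases this as $\mu_n(B(x,h_n))\geq \tfrac{n_{\mathrm{eff}}}{n}\pi^Q(B(x,h_n))$ followed by $\rho_{h_n}(\pi^Q,\pi^Q)\leq N(h_n/2,d)=K$, but the content is identical). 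You also correctly note that the stated bound carries the harmless extra factor $\max_i(\pi^Q_i/\pi^P_i)\geq 1$, so the sharper bound you derive implies the claim.
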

\begin{proof} 
First note that  for $K \in \{P,Q\}$ it holds  $\mu^K \in L^\infty(\pi^K)$ with $\lVert \diff{\mu^K \slash \diff \pi^K } \rVert_{L^\infty(\pi^K)} = \kappa(K)$ and hence 
\[\mathfrak{C}(\gamma^P_{\mathrm{ps}},\gamma^Q_{\mathrm{ps}},\mu^P,\mu^Q,\infty,\infty) \leq \mathfrak{C}(1 - \lambda(P)^2, 1 - \lambda(Q)^2,\mu^P,\mu^Q,\infty,\infty) = \mathfrak{C}^\prime(P,Q,\mu^P,\mu^Q).\]
Using $f \in \mathcal{H}(1,\overbar{L})$, Theorem \ref{theo:upper} therefore gives the bound 
\begin{equation}\label{eq:finite}
\E\Big[\lVert \hat{f}_n - f^\ast \rVert_{L^2(\pi^Q)}^2 \Big] \leq \frac{c^2\overbar{L}^2}{n_{\mathrm{eff}}} + \mathfrak{C}^\prime(P,Q,\mu^P,\mu^Q) \frac{\lVert f^\ast \rVert_\infty^2 + \sigma^2}{n} \rho_{h_n}(\mu_n,\pi^Q).
\end{equation}
Since $h_n < \delta$ we have for any $x \in \cX$,
\begin{align*}
\mu_n(B(x,h_n)) = \frac{n_P}{n}\pi^P(\{x\}) + \frac{n_Q}{n} \pi^Q(\{x\}) &\geq \frac{n_P \min_{i \in [K]} \frac{\pi^P_i}{\pi^Q_i} + n_Q}{n}\pi^Q(\{x\})\\
&= \frac{n_P \min_{i \in [K]} \frac{\pi^P_i}{\pi^Q_i} + n_Q}{n}\pi^Q(B(x,h_n)).
\end{align*}
Hence, it follows from \cite[Proposition 1]{pathak22} and $h_n/2 < \delta$
\begin{align*}
\rho_{h_n}(\mu_n,\pi^Q) \leq \frac{n}{n_P \min_{i \in [K]} \frac{\pi^P_i}{\pi^Q_i} + n_Q} N(h_n/2,d) &= \frac{n K}{n_{\mathrm{eff}}},
\end{align*}
which combined with \eqref{eq:finite} yields the assertion.
\end{proof}

\subsection{Markov chains with invariant distributions belonging to an $\alpha$-family}
Let us first extend the definition of $\alpha$-families of source-target pairs from \cite{pathak22} to higher dimensions, which for bounded Euclidean subspaces can be understood as a generalization of source-target pairs of probability distributions with transfer exponent introduced in \cite{kpotufe21}. Denote by $\mathcal{P}(\cX,\cB(\cX))$ the family of probability distributions on the measurable space $(\cX,\cB(\cX))$.

\begin{definition} 
Suppose that $\cX \subset \R^{\mathsf{d}}$ is bounded with diameter $D > 0$. For $\alpha \geq \mathsf{d}$ and $C \geq 1$, let 
\[\mathcal{D}(\alpha,C) \coloneq \Big\{(\PP,\QQ) \in \mathcal{P}(\cX,\cB(\cX))^2 : \sup_{0 < h \leq D} \big(\tfrac{h}{D}\big)^{\alpha} \rho_h(\PP,\QQ) \leq C \Big\}\] 
and for $\alpha \in (0,\mathsf{d})$, $\alpha^\prime \in [0,\alpha]$ and $C \geq 1$ define 
\[\mathcal{D}^\prime(\alpha,\alpha^\prime,C) \coloneq \Big\{(\PP,\QQ) \in \mathcal{P}(\cX,\cB(\cX))^2 : \sup_{0 < h \leq D} \big(\tfrac{h}{D}\big)^{\alpha} \rho_h(\PP,\QQ) \leq C, \, \sup_{0 < h \leq D} \big(\tfrac{h}{D}\big)^{\alpha^\prime} \rho_h(\QQ,\QQ) \leq C \Big\}.\]
\end{definition}

Typical examples of source-target distributions $(\PP,\QQ)$ that belong to $\mathcal{D}^\prime(\alpha,\alpha^\prime,C)$ for  $\alpha < \mathsf{d}$ are such that the support of $\QQ$ is contained in some subset of $\mathcal{X}$ that is isometric to a lower-dimensional subspace $\mathcal{X}_Q$ of $\R^{\mathsf{d}}$, see in particular Example \ref{ex:alpha} in Appendix \ref{app:exalpha}. This explains the second condition on the growth of $\rho_h(\mathbb{Q}, \QQ)$ as $h \downarrow 0$, which in such cases can be controlled by the covering number growth of a lower-dimensional subspace. More precisely, we have the following for the typical situation when the $\varepsilon$-covering number scales as $\varepsilon^{-\mathsf{d}_Q}$.
\begin{lemma} 
Suppose that $\QQ$ is supported on $\mathcal{X}_Q \subset \R^{\mathsf{d}}$ with $N_{\mathcal{X}_Q}(\varepsilon,d\vert_{\mathcal{X}_Q}) \asymp \varepsilon^{-\mathsf{d}_Q}$, $\mathsf{d}_Q \leq \mathsf{d}, \varepsilon \in (0,1]$. If $\QQ\vert_{\mathcal{X}_Q}$ is a doubling measure on $\mathcal{X}_Q$, i.e., for some $C > 0$ and any $x \in \cX_Q,r > 0$, 
\[\QQ(B_d(x,2r) \cap \mathcal{X}_Q) \leq C\QQ(B_d(x,r) \cap \mathcal{X}_Q),\]
we have $\rho_h(\mathbb{Q},\mathbb{Q}) \asymp h^{-\mathsf{d}_Q}$ for $h \in (0,1]$.
\end{lemma}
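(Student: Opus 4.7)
The plan is to establish matching upper and lower bounds for $\rho_h(\QQ, \QQ)$ separately. The upper bound follows from a standard covering argument that does not use the doubling assumption, while the lower bound is where doubling enters, via a maximal packing of $\mathcal{X}_Q$.

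For the upper bound, I fix an optimal $(h/2)$-covering $x_1, \ldots, x_N \in \mathcal{X}_Q$ with $N = N_{\mathcal{X}_Q}(h/2, d\vert_{\mathcal{X}_Q}) \asymp h^{-\mathsf{d}_Q}$. For any $x \in B(x_i, h/2) \cap \mathcal{X}_Q$, the triangle inequality gives $B(x_i, h/2) \subset B(x, h)$ and hence $\QQ(B(x, h)) \geq \QQ(B(x_i, h/2))$. Summing over the cover and using that $\QQ$ is supported on $\mathcal{X}_Q$,
\[
\rho_h(\QQ, \QQ) \leq \sum_{i=1}^N \int_{B(x_i, h/2) \cap \mathcal{X}_Q} \frac{\QQ(\diff{x})}{\QQ(B(x_i, h/2))} \leq N \lesssim h^{-\mathsf{d}_Q},
\]
which is essentially the argument underlying \cite[Proposition 1]{pathak22} already invoked in Remark \ref{rem:rate}.

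For the lower bound, I choose a maximal $2h$-separated set $\{x_1, \ldots, x_M\} \subset \mathcal{X}_Q$. The standard packing-covering comparison gives $M \geq N_{\mathcal{X}_Q}(2h, d\vert_{\mathcal{X}_Q}) \asymp h^{-\mathsf{d}_Q}$, and the balls $B(x_i, h)$ are pairwise disjoint by construction. For any $x \in B(x_i, h) \cap \mathcal{X}_Q$ the inclusion $B(x, h) \subset B(x_i, 2h)$ combined with the doubling property of $\QQ\vert_{\mathcal{X}_Q}$ yields
\[
\QQ(B(x, h)) \leq \QQ(B(x_i, 2h)) \leq C\, \QQ(B(x_i, h)).
\]
Since the centers $x_i$ lie in $\supp(\QQ) = \mathcal{X}_Q$, each $\QQ(B(x_i, h))$ is strictly positive, so summing over the packing gives
\[
\rho_h(\QQ, \QQ) \geq \sum_{i=1}^M \int_{B(x_i, h) \cap \mathcal{X}_Q} \frac{\QQ(\diff{x})}{C\, \QQ(B(x_i, h))} = \frac{M}{C} \gtrsim h^{-\mathsf{d}_Q}.
\]

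The only real obstacle is the lower bound: the covering/packing of $\mathcal{X}_Q$ is purely geometric and says nothing about how the mass of $\QQ$ is actually distributed. Doubling is precisely what converts $\QQ(B(x, h))$ at an arbitrary $x \in B(x_i, h)$ into the fixed quantity $\QQ(B(x_i, h))$ in the denominator, which then cancels against the $\QQ$-measure integrated over the same ball. Without such a regularity assumption the lower bound can genuinely fail---if $\QQ$ concentrated most of its mass near a single point of $\mathcal{X}_Q$, then $\QQ(B(x, h))$ would remain of order $1$ on a set of full $\QQ$-measure and $\rho_h(\QQ, \QQ)$ would stay bounded as $h \downarrow 0$, in contradiction with $h^{-\mathsf{d}_Q}$.
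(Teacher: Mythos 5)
Your proof is correct and follows essentially the same route as the paper: a packing of $\gtrsim h^{-\mathsf{d}_Q}$ disjoint $h$-balls combined with the inclusion $B(x,h)\subset B(x_i,2h)$ and the doubling property for the lower bound, and the standard covering argument (which the paper simply cites from Proposition 1 of \cite{pathak22}) for the upper bound. The only cosmetic differences are that you construct the packing via a maximal $2h$-separated set rather than invoking the packing--covering inequality directly, and that you make the positivity of $\QQ(B(x_i,h))$ explicit, which indeed follows from the doubling hypothesis.
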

\begin{proof} 
Since the packing number $N^{\mathrm{pack}}_{\mathcal{X}_Q}(\varepsilon,d\vert_{\mathcal{X}_Q})$ satisfies $N^{\mathrm{pack}}_{\mathcal{X}_Q}(\varepsilon,d\vert_{\mathcal{X}_Q}) \geq N_{\mathcal{X}_Q}(2\varepsilon,d\vert_{\mathcal{X}_Q})$, we can find $H \gtrsim h^{-\mathsf{d}_Q}$ disjoint balls $B_{d\vert\mathcal{X}_Q}(x_i,\varepsilon) \subset \mathcal{X}_Q$ with centres $x_i \in \cX_Q$, s.t.\ $\bigcup_{i=1}^H B_{d\vert_{\mathcal{X}_Q}}(x_i,\varepsilon) \subset \mathcal{X}_Q$. Since $\QQ$ is supported on $\mathcal{X}_Q$ and is a doubling measure thereon, it follows 
\begin{align*} 
\rho_h(\QQ,\QQ) = \int_{\mathcal{X}_Q} \frac{1}{\QQ(B_d(x,h) \cap \mathcal{X}_Q)} \, \QQ(\diff{x}) &\geq \sum_{i=1}^H \int_{B_d(x_i,h) \cap \mathcal{X}_Q}\frac{1}{\QQ(B_d(x,h) \cap \mathcal{X}_Q)} \, \QQ(\diff{x}) \\ 
&\geq \sum_{i=1}^H \frac{\QQ(B_d(x_i,h) \cap \mathcal{X}_Q)}{\QQ(B_d(x_i,2h) \cap \mathcal{X}_Q)}\\
&\geq C^{-1}H \gtrsim h^{-\mathsf{d}_Q},
\end{align*}
where we used, that by triangle inequality, for any $x \in B_d(x_i,h)$ we have $B_d(x,h) \subset B_d(x_i,2h)$. Complementing this lower bound with the upper bound $\rho_h(\QQ,\QQ) \lesssim h^{-\mathsf{d}_Q}$, implied by \cite[Proposition 1]{pathak22}, finishes the proof.
\end{proof}

Source-target distribution combinations belonging to $\mathcal{D}^\prime(\alpha.\alpha^\prime,C)$ can be particularly desirable for transfer from $\pi^P$ to $\pi^Q$. 
In fact, we  have the following result that recovers \cite[Corollary 1]{pathak22} as a special case for i.i.d.\ samples and $\cX = [0,1]$. 

\begin{proposition} \label{prop:alpha}
Let $\cX \Subset \R^{\mathsf{d}}$ with diameter $0< D < \infty$ and a $(P,Q)$-Markov covariate shift regression model with $f^\ast \in \mathcal{H}(\beta,L)$ s.t.\ $\pi^Q(\lvert f^\ast\rvert^2) \leq M$ and $n_P \geq (\gamma^P_{\mathrm{ps}})^{-1}, n_Q \geq (\gamma^Q_{\mathrm{ps}})^{-1}$ be given. Define 
\begin{align*}
\mathfrak{L} &= \mathfrak{L}(\gamma^P_{\mathrm{ps}},\gamma^Q_{\mathrm{ps}},\mu^P,\mu^Q,\mathfrak{p},\mathfrak{q},\sigma^2,\alpha,\mathsf{d},C,D,M)\\
&\coloneq L^2 + 2((1+ 2^{\mathsf{d}})\vee C)(1 \vee D^{\alpha \vee \mathsf{d}})\mathfrak{C}(\lambda^P_{\mathrm{ps}}, \lambda^Q_{\mathrm{ps}}, \mu^P,\mu^Q,\mathfrak{p},\mathfrak{q})(M^2+ \sigma^2),
\end{align*}
and the bandwidth $h_n \coloneq \big(n_Q + n_P^{(2\beta+ \mathsf{d})/(2\beta + \alpha)}\big)^{-1/(2\beta + \mathsf{d})},$ where we assume $n$ large enough s.t.\ $h_n \leq  D$.
\begin{enumerate}[label = (\roman*), ref = (\roman*)]
\item \label{prop:alpha1} If $(\pi^P,\pi^Q) \in \mathcal{D}(\alpha,C)$ for some $\alpha \geq \mathsf{d}, C \geq 1$, then
\[\E\Big[\big\lVert \hat{f}_n -f^\ast \big\rVert^2_{L^2(\pi^Q)}\Big] \leq \mathfrak{L} \Big({n_P^{\frac{2\beta + \mathsf{d}}{2\beta + \alpha}}} + n_Q\Big)^{-\frac{2\beta}{2\beta + \mathsf{d}}}.\]
\item \label{prop:alpha2} If $(\pi^P,\pi^Q) \in \mathcal{D}(\alpha,\alpha^\prime,C)$ for some $\alpha < \mathsf{d}, \alpha^\prime \leq \alpha, C \geq 1$, then  
\[\E\Big[\big\lVert \hat{f}_n -f^\ast \big\rVert^2_{L^2(\pi^Q)}\Big] \leq \mathfrak{L} \Big(n_P^{\frac{2\beta + \alpha^\prime}{2\beta + \alpha}} + n_Q\Big)^{-\frac{2\beta}{2\beta + \alpha^\prime}}.\]
\end{enumerate}
\end{proposition}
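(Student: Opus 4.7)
My plan is to invoke Theorem \ref{theo:upper} and then control $\rho_{h_n}(\mu_n,\pi^Q)$ via the $\alpha$-family assumption. Theorem \ref{theo:upper} gives
\[\E\bigl[\lVert \hat{f}_n - f^\ast\rVert^2_{L^2(\pi^Q)}\bigr] \leq L^2 h_n^{2\beta} + \mathfrak{C}\,\frac{M + \sigma^2}{n}\, \rho_{h_n}(\mu_n,\pi^Q),\]
so the task reduces to an upper bound on $\rho_{h_n}(\mu_n,\pi^Q)$.

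Since $\mu_n = (n_P\pi^P + n_Q\pi^Q)/n$, I would use the pointwise inequalities $\mu_n(B(x,h_n)) \geq (n_P/n)\pi^P(B(x,h_n))$ and $\mu_n(B(x,h_n)) \geq (n_Q/n)\pi^Q(B(x,h_n))$ to obtain the two complementary estimates
\[\rho_{h_n}(\mu_n,\pi^Q) \leq \frac{n}{n_P}\rho_{h_n}(\pi^P,\pi^Q), \qquad \rho_{h_n}(\mu_n,\pi^Q) \leq \frac{n}{n_Q}\rho_{h_n}(\pi^Q,\pi^Q).\]
In case \ref{prop:alpha1}, the first will be controlled by $(\pi^P,\pi^Q) \in \mathcal{D}(\alpha,C)$, giving $\rho_{h_n}(\pi^P,\pi^Q) \leq C(D/h_n)^\alpha$, while the second is handled by \cite[Proposition 1]{pathak22} combined with Lemma \ref{lem:cov}, yielding $\rho_{h_n}(\pi^Q,\pi^Q) \lesssim (D/h_n)^{\mathsf{d}}$. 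In case \ref{prop:alpha2}, both estimates are supplied directly by $(\pi^P,\pi^Q) \in \mathcal{D}^\prime(\alpha,\alpha^\prime,C)$, giving bounds of order $(D/h_n)^\alpha$ and $(D/h_n)^{\alpha^\prime}$.

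I would then verify the claimed rates via a short case split on whether $n_P$ or $n_Q$ dominates. For part \ref{prop:alpha1}, setting $N \coloneq n_P^{(2\beta+\mathsf{d})/(2\beta+\alpha)} + n_Q$ and $h_n = N^{-1/(2\beta+\mathsf{d})}$: when $n_Q \geq n_P^{(2\beta+\mathsf{d})/(2\beta+\alpha)}$ so that $N \asymp n_Q$, the $n_Q$-bound paired with $(D/h_n)^{\mathsf{d}}$ balances the bias $L^2 N^{-2\beta/(2\beta+\mathsf{d})}$; in the complementary regime $N \asymp n_P^{(2\beta+\mathsf{d})/(2\beta+\alpha)}$ and the $n_P$-bound paired with $(D/h_n)^\alpha$ balances the bias. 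Case \ref{prop:alpha2} is treated identically, but with bandwidth $h_n = (n_P^{(2\beta+\alpha^\prime)/(2\beta+\alpha)} + n_Q)^{-1/(2\beta+\alpha^\prime)}$ and $\alpha^\prime$ in place of $\mathsf{d}$ in the $n_Q$-bound. Collecting the constants from $\mathfrak{C}$, $C$, $D^{\alpha\vee\mathsf{d}}$ and the covering prefactor finally recovers the stated form of $\mathfrak{L}$.

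The calculation is essentially bookkeeping; the only conceptual subtlety worth flagging is the role of the second condition defining $\mathcal{D}^\prime$ in case \ref{prop:alpha2}. When $\pi^Q$ concentrates on a lower-dimensional subset of $\R^{\mathsf{d}}$, the ambient covering bound on $\rho_h(\pi^Q,\pi^Q)$ only sees dimension $\mathsf{d}$ and is far too crude, so one genuinely needs the intrinsic $\alpha^\prime$-exponent to recover the faster rate $n_Q^{-2\beta/(2\beta+\alpha^\prime)}$ in the target-rich regime.
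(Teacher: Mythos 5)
Your proposal is correct and follows essentially the same route as the paper: bound $\rho_{h_n}(\mu_n,\pi^Q)$ by the minimum of $\tfrac{n}{n_P}\rho_{h_n}(\pi^P,\pi^Q)$ and $\tfrac{n}{n_Q}\rho_{h_n}(\pi^Q,\pi^Q)$, control the first via the $\alpha$-family condition and the second via the covering bound (case \ref{prop:alpha1}) or the $\alpha^\prime$-exponent (case \ref{prop:alpha2}), and then balance against the bias with the stated bandwidth by a case split on which sample dominates. Your closing remark on why the second condition in $\mathcal{D}^\prime$ is genuinely needed when $\pi^Q$ lives on a lower-dimensional set matches the paper's motivation exactly.
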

\begin{proof}
Let $\zeta(\alpha,\mathsf{d}) \coloneq \mathsf{d} \one_{[d,\infty)}(\alpha) + \alpha^\prime \one_{(0,\mathsf{d})}(\alpha)$ and $\mathfrak{C} = \mathfrak{C}(\lambda^P_{\mathrm{ps}}, \lambda^Q_{\mathrm{ps}}, \mu^P,\mu^Q,\mathfrak{p},\mathfrak{q})$. By definition of $\mathcal{D}(\alpha,C)$, $\mathcal{D}(\alpha,\alpha^\prime,C)$ and, in case $\alpha \geq \mathsf{d}$, using $\rho_{h_n}(\pi^Q,\pi^Q) \leq N(h_n/2,d) \leq (1 + 2D/h_n)^{\mathsf{d}}$ we have
\begin{align*} 
\rho_h(\mu_n,\pi^Q) &= \int_{\cX} \frac{n}{n_P \pi^P(B(x,h_n)) + n_Q\pi^Q(B(x,h_n))} \, \pi^Q(\diff{x}) \\
&\leq n\min\Big\{\frac{1}{n_P} \rho_{h_n}(\pi^P,\pi^Q), \frac{1}{n_Q} \rho_{h_n}(\pi^Q, \pi^Q)  \Big\}\\
&\leq n\min\Big\{\frac{CD^\alpha}{n_Ph_n^\alpha}, \frac{(1+2D/h_n)^{\mathsf{d}} \one_{[\mathsf{d},\infty)}(\alpha) + C\one_{(0,\mathsf{d})}(\alpha)}{n_Q} \Big\}\\
&\leq ((1+ 2^{\mathsf{d}})\vee C)(1 \vee D^{\alpha \vee \mathsf{d}}) n\min\Big\{\frac{1}{n_Ph^\alpha_n}, \frac{1}{n_Q h^{\zeta(\alpha,\mathsf{d})}_n} \Big\}.
\end{align*}
Hence, Theorem \ref{theo:upper} implies 
\begin{equation}\label{eq:bound_alpha}
\E\Big[\lVert \hat{f}_n - f^\ast \rVert_{L^2(\pi^Q)}^2 \Big] \leq  L^2h_n^{2\beta} + ((1+ 2^{\mathsf{d}})\vee C)(1 \vee D^{\alpha \vee \mathsf{d}})\mathfrak{C} \frac{\pi^Q(\lvert f^\ast \rvert^2) + \sigma^2}{n_P h_n^{\alpha} \vee n_Q h_n^{\zeta(\alpha,\mathsf{d})}} .
\end{equation}
Observe now that 
\begin{align*} 
\max\Big\{n_P\Big(n_Q^{-\frac{\alpha}{2\beta + \zeta(\alpha,\mathsf{d})}} \wedge n_P^{-\frac{\alpha}{2\beta + \alpha}} \Big), n_Q\Big(n_Q^{-\frac{\zeta(\alpha,\mathsf{d})}{2\beta + \zeta(\alpha,\mathsf{d})}} \wedge n_P^{-\frac{\zeta(\alpha,\mathsf{d})}{2\beta + \alpha}} \Big)  \Big\} &\geq \begin{cases}  n_Q^{\frac{2\beta}{2\beta+\zeta(\alpha,\mathsf{d})}}, & n_Q > n_P^{\frac{2\beta + \zeta(\alpha,\mathsf{d})}{2\beta + \alpha}},\\ n_P^{\frac{2\beta}{2\beta+\alpha}}, & n_Q \leq n_P^{\frac{2\beta + \zeta(\alpha,\mathsf{d})}{2\beta + \alpha}}\end{cases}\\ 
&= n_P^{\frac{2\beta}{2\beta+\alpha}} \vee n_Q^{\frac{2\beta}{2\beta+\zeta(\alpha,\mathsf{d})}}.
\end{align*}
Hence, for 
\[h_n \coloneq \big(n_Q + n_P^{(2\beta+ \zeta(\alpha,\mathsf{d}))/(2\beta + \alpha)}\big)^{-\frac{1}{2\beta + \zeta(\alpha,\mathsf{d})}} \geq \frac{1}{2}\min\Big\{n_Q^{{-\frac{1}{2\beta + \zeta(\alpha,\mathsf{d})}}}, n_P^{-\frac{1}{2\beta + \alpha}} \Big\},\]
it follows 
\[\frac{1}{{n_P h_n^{\alpha} \vee n_Q h_n^{\zeta(\alpha,\mathsf{d})}}} \leq 2 n_P^{-\frac{2\beta}{2\beta+\alpha}} \wedge 2n_Q^{-\frac{2\beta}{2\beta+\zeta(\alpha,\mathsf{d})}} \leq 4 \Big(n_P^{\frac{2\beta + \zeta(\alpha,\mathsf{d})}{2\beta + \alpha}} + n_Q\Big)^{-\frac{2\beta}{2\beta + \zeta(\alpha,\mathsf{d})}}. \]
Plugging $h_n$ into \eqref{eq:bound_alpha} now yields the claim.
\end{proof}
\begin{remark} 
Let $\mathfrak{K}(\Gamma,C,\alpha)$ be a collection of Markov kernel pairs $(P,Q)$ on $\cX \Subset \R^{\mathsf{d}}$ with diameter $D$ endowed with the restriction of the maximum metric $d_\infty$ such that for some $\Gamma \in (0,1]$ we have pseudo spectral gaps  $\gamma^P_{\mathrm{ps}} \vee \gamma^Q_{\mathrm{ps}} \geq \Gamma$ and assume that the independent Markov chains $X^P, X^Q$ are stationary. Let $n_P \wedge n_Q \geq \Gamma^{-1}$ and 
define 
\[\mathcal{F}(\beta,L) \coloneq \mathcal{H}(\beta,L) \cap \big\{f\colon \cX \to \R: \lVert f \rVert_\infty \leq L \big\}.\] 
Then, Proposition \ref{prop:alpha} yields the uniform upper bound 
\[\inf_{\hat{f}} \sup_{(P,Q) \in \mathfrak{K}(\Gamma,C,\alpha)} \sup_{f^\ast \in \mathcal{F}(\beta,L)} \E_{f^\ast}\Big[\big\lVert \hat{f} -f^\ast \big\rVert_{L^2(\pi^Q)}\Big] \leq C(\alpha,\mathsf{d},L,\Gamma,\sigma^2,D) \Big(n_P^{\frac{2\beta +\mathsf{d}}{2\beta +\alpha}} + n_Q \Big)^{-\frac{2\beta}{2\beta + \mathsf{d}}}.\]
For the special case $\cX = [0,1]$ and some specified constant $n_{\ell}(\sigma,L,C,\alpha,\beta)$, for $n_P \vee n_Q \geq n_{\ell}$ we get by \cite[Theorem 2]{pathak22}  the accompanying lower bound 
\[\inf_{\hat{f}} \sup_{(P,Q) \in \mathfrak{K}(\Gamma,C,\alpha)} \sup_{f^\ast \in \mathcal{F}(\beta,L)} \E_{f^\ast}\Big[\big\lVert \hat{f} -f^\ast \big\rVert_{L^2(\pi^Q)}\Big] \geq c(\alpha,L,\sigma^2) \Big(n_P^{\frac{2\beta +\mathsf{d}}{2\beta +\alpha}} + n_Q \Big)^{-\frac{2\beta}{2\beta + \mathsf{d}}},\]
by choosing the independence kernels $P(x,\cdot) \coloneq \pi^P, Q(x,\cdot) \coloneq \pi^Q$ with pseudo spectral gap $1 \geq \Gamma$ for the hard distributions $(\pi^P,\pi^Q)$ specified in \cite[Section 4.2]{pathak22}. Hence, for the scalar case and $\alpha \geq 1$, part \ref{prop:alpha1} yields a minimax-optimal convergence rate. As remarked in \cite{pathak22}, with some effort, their lower bound proof for the scalar i.i.d.\ covariate shift regression  may be extended to higher dimensions, i.e., $\cX = [0,1]^{\mathsf{d}}$, by first constructing a hard distribution pair $(\pi^P,\pi^Q)$ as in the lower bound for transfer learning from \cite[Section 4]{kpotufe21} and then follow a similar program as in the case $\mathsf{d} = 1$. This then also implies the minimax optimality of the upper bound given in part \ref{prop:alpha1} in any dimension $\mathsf{d} \geq 1$. For the scalar case, the same discussion applies to the minimax optimality of the bound provided in part \ref{prop:alpha2} for $\alpha <1, \alpha^\prime = 0$. In higher dimensions optimality for $\alpha < \mathsf{d}$ is still open in the i.i.d.\ case.
\end{remark}

Proposition \ref{prop:alpha} shows that when $\alpha > \mathsf{d}$ and $n_Q/n \to 0$, the generalization risk is significantly larger than in the situation without covariate shift, see Remark \ref{rem:rate}, and prediction becomes polynomially harder with increasing order of the index $\alpha$.  On the other hand when $\alpha$ is  smaller than $\mathsf{d}$, estimating $f^\ast$ based on Markov data $X^P$ yields faster estimation rates than the worst-case minimax rate $n^{-2\beta/(2\beta + \mathsf{d})}$ without covariate shift. 
Note also that the convergence rate $n^{-2\beta/(2\beta + \alpha)}$ improves to $n^{-2\beta/(2\beta + \alpha^\prime)}$ if we have access to a pure training sample coming from $X^Q$, i.e., when no transfer is necessary w.r.t.\ $\pi^Q$.
The $\alpha$-index is therefore a suitable measure for the similarity between source-target pairs and the associated complexity of the estimation task for regression based on Markov data exhibiting a covariate shift.  \smallskip

For specific ergodic Markov models the transition kernel is often specified but the invariant distribution is unknown. To bring the previous result to life we therefore seek an appropriate class of Markov kernel pairs s.t.\ the corresponding invariant distributions belong to some $\alpha$-family. To this end we focus on  uniformly ergodic Markov chains as a subclass of spectral gap Markov chains. Suppose therefore that $X^P$ is a uniformly ergodic Markov chain, i.e., for some constants $c_P > 0$ and $\kappa_P \in (0,1)$ it holds 
\[\lVert P^n(x,\cdot) - \pi^P \rVert_{\mathrm{TV}} \leq c_P \kappa_P^n.\]
As discussed in Section \ref{sec:specgap}, this is equivalent to the Doeblin recurrence condition 
\[P^{m_P}(x,\cdot) \geq \varepsilon_P \nu^P(\cdot), \quad x \in \cX,\] 
for some $m_{P} \in \N, \varepsilon_{P} \in (0,1)$ explicitly related to $c_{P}, \kappa_{P}$ by \eqref{eq:doeblin_rate} and a probability measure $\nu^{P}$ on $(\cX,\cB(\cX))$. We call $\nu^P$ a \textit{minorizing measure} of $P$. Recall that $P$ has pseudo spectral gap bounded by the mixing time via
\[\gamma^P_{\mathrm{ps}} \geq \frac{1}{2 t^P_{\mathrm{mix}}} > 0. \] 
We now introduce the new concept of a \textit{transfer exponent} of the transition kernels $P,Q$, which will play an analogous role to the transfer exponent of a pair of source and target distributions given in \cite{kpotufe21} in the context of an i.i.d.\ classification problem. 

\begin{definition} 
We say that the kernels $(P,Q)$ have transfer exponent $\gamma \geq 0$ with constant $C > 0$ and radius $\overbar{h}$ if there exists a minorizing measure $\nu^P$ of $P$ and $m_Q \in \N$ s.t.\ for some closed set $\cX_Q \subset \cX$ with the property $\supp Q^{m_Q}(x,\cdot) \subset \cX_Q$ for all $x \in \cX$, we have 
\[\forall x,y \in \cX_Q, h \in (0,\overbar{h}]\colon \quad \nu^P(B(x,h)) \geq C\Big(\frac{h}{\overbar{h}}\Big)^\gamma Q^{m_Q}(y,B(x,h)).\] 
The class of such kernel pairs is denoted by $\mathcal{T}(\gamma,C,\overbar{h})$.
\end{definition}
If $m_Q = 1$, the set $\cX_Q$ is absorbing for the Markov chain $X^Q$ and its state space can be effectively confined to $\cX_Q$. We do not necessarily require the Markov chain $X^Q$ to be uniformly ergodic as well, although this assumption is quite plausible for concrete models as demonstrated in the illustrative examples for Markov kernels with transfer exponent considered next. We start with a case when the transfer exponent is $0$.

\begin{example}[Bounded kernel density ratio]
Suppose that for any $x \in \cX$ we have $Q(x,\cdot) \ll \nu^P$ and $\sup_{x \in \cX} \tfrac{\diff{Q(x,\cdot)}}{\diff\nu^P} \leq C < \infty$. Then $Q(y,B(x,h)) \leq C\nu^P(B(x,h))$ for any $x,y \in \cX$, i.e., $(P,Q) \in \mathcal{T}(0,C^{-1},\overbar{h})$ for any $\overbar{h} > 0$. E.g., let $\cX \subset \R^{\mathsf{d}}$ be bounded with $\lebesgue(\cX) > 0$, $P(x,\cdot) \sim \lebesgue\vert_{\cX}$ and $Q(x,\cdot) \ll P(x,\cdot)$ with 
\[0 < c \leq \inf_{x \in \cX} \tfrac{\diff{P(x,\cdot)}}{\diff\lebesgue\vert_{\cX}} \leq \sup_{x \in \cX} \tfrac{\diff{P(x,\cdot)}}{\diff\lebesgue\vert_{\cX}} \leq C< \infty \] 
and $\sup_{x \in \cX} \tfrac{\diff{Q(x,\cdot)}}{\diff{P(x,\cdot)}} \leq C^\prime < \infty$. Then, $\nu^P \coloneq \lebesgue\vert_{\cX} / \lebesgue(\cX)$ is a minorizing measure for $P$ since $P(x,\cdot) \geq c\lebesgue(\cX) \tfrac{\lebesgue\vert_{\cX}}{\lebesgue(\cX)}$ and, moreover, 
\[\sup_{x \in \cX} \frac{\diff{Q(x,\cdot)}}{\diff{\nu^P}} = \sup_{x \in \cX} \frac{\diff{Q(x,\cdot)}}{\diff P(x,\cdot)}\frac{\diff{P(x,\cdot)}}{\diff{\nu^P}} \leq \lebesgue(\cX)CC^\prime < \infty. \]
Hence, from  above, $(P,Q) \in \mathcal{T}(0,(\lebesgue(\cX)CC^\prime)^{-1},\overbar{h})$ for any $\overbar{h} > 0$.
\end{example}

Next, we construct explicit models with strictly positive transfer exponent.
\begin{example} \label{ex:beta}
Let $\mathrm{B}(\alpha,\beta)$ be the beta distribution, for $\gamma \geq 0$ let $\psi_\gamma$ be the density of $\mathrm{B}(1+\gamma,1)$, i.e., $\psi_\gamma(x) = (1+\gamma) x^\gamma$, $x \in [0,1]$, and let $F_\gamma$ be the corresponding CDF with left inverse $F^{-1}_\gamma$. For independent vectors $(U_i)_{i \in \N} \overset{\mathrm{iid}}{\sim} \mathrm{U}([0,1])$, $(\tilde{U}_i)_{i \in \N} \overset{\mathrm{iid}}{\sim} \mathrm{U}([0,1])$ consider the Markov chains $(X^P_n)_{n \in \N_0}$, $(X^Q_n)_{n \in \N_0}$ given by $X^P_{n+1} = F^{-1}_{\gamma_P + X_n^P}(U_n)$ and $X^Q_{n+1} = F^{-1}_{\gamma_Q + X_n^Q}(\tilde{U}_n)$ with $0\leq \gamma_Q < \gamma_P < \infty$ and  independent initial values $X_0^P \in [0,1]$ and $X^Q_0 \in [0,1]$. Then, $(X^P_{n+1} \mid X_n^P = x) \sim \mathrm{B}(1+\gamma_P + x,1)$ and $(X^Q_{n+1} \mid X_n^Q = x) \sim \mathrm{B}(1+\gamma_Q + x,1)$, s.t.\ the transition kernels $P$ and $Q$ have transition densities $p(x,\cdot) = \psi_{x + \gamma_P}(\cdot)$ and $q(x,\cdot) = \psi_{x + \gamma_Q }(\cdot)$, respectively.

Since for any $x \in [0,1]$ we have 
\[p(x,y) = (1 + x + \gamma_P) y^{x +\gamma_P} \geq (1+ \gamma_P) y^{1 + \gamma_P}, \quad x,y \in [0,1]\] 
and similarly $q(x,y) \geq  (1+ \gamma_Q) y^{1 + \gamma_Q}, x,y \in [0,1]$, it follows that for $\nu^P = \mathrm{B}(2 + \gamma_P,1), \nu^Q = \mathrm{B}(2 + \gamma_Q,1)$ we have 
\[P(x,\cdot) \geq \frac{1 + \gamma_P}{2 + \gamma_P} \nu^P(\cdot), \quad Q(x,\cdot) \geq \frac{1 + \gamma_Q}{2 + \gamma_Q} \nu^Q(\cdot), \quad x \in [0,1],
\] 
i.e., $X^Q$ and $X^P$ are uniformly ergodic. Moreover, for any $0< h \leq 1$ elementary calculations show  
\begin{align*}
h^{1 + \gamma_p - \gamma_q} Q(y,B(x,h)) \leq (2 + \gamma_Q) h^{1 + \gamma_p - \gamma_q} \int_{(x-h) \vee 0}^{(x+h) \wedge 1} y^{\gamma_Q} \diff{y} &\leq \frac{2 + \gamma_Q}{1 +\gamma_Q} \int_{(x-h) \vee 0}^{(x+h) \wedge 1} (2 + \gamma_P)y^{1 +\gamma_p} \diff{y}\\
&= \frac{2 + \gamma_Q}{1 +\gamma_Q} \nu^P(B(x,h)).
\end{align*}
It follows $(P,Q) \in \mathcal{T}(1+ \gamma_P - \gamma_Q, (1+ \gamma_Q)/(2+ \gamma_Q), 1)$. Noting that 
\[Q(0,B(0,h)) = h^{1 + \gamma_Q}, \quad \nu^P((0,h)) = h^{2 + \gamma_P}, \quad h \in (0,1],\]
we also see that $\gamma = 1+ \gamma_P - \gamma_Q$ is the minimal admissible kernel transfer exponent.
\end{example}

The statistically perhaps most intriguing situation concerns dimension reduction of the state space from source to target chain.

\begin{example}[$Q(x,\cdot)$ supported on lower-dimensional subspace] \label{ex:dim}
Suppose that $\cX = [0,1]^{\mathsf{d}}$ 
and that $\supp Q(x,\cdot) \subset \cX_Q \Subset \cX$ for all $x \in \cX$, where $\cX_Q$ is homeomorphic to $[0,1]^{\mathsf{d}_Q}$ for some $\mathsf{d}_Q < \mathsf{d}$. Let $\varphi$ be the associated homeomorphism and let $d_{\varphi}$ be the induced metric on $[0,1]^{\mathsf{d}_Q}$ given by $d_{\varphi}(x,y) = d(\varphi^{-1}(x),\varphi^{-1}(y))$, such that $(\cX_Q, d\vert_{\cX_Q})$ and $([0,1]^{\mathsf{d}_Q},d_{\varphi})$ are isometric. Assume that $\varphi$ is $\beta$-Hölder continuous for some $\beta \in (0,1]$ w.r.t.\ some vector norm induced metric $d^\prime$, i.e., on $[0,1]^{\mathsf{d}_Q}$ it holds 
\[d^\prime(\varphi(x),\varphi(y)) \leq L\lvert d(x,y)\rvert^\beta, \quad x,y \in \cX_Q,\] 
for some $L > 0$. Then, $\varphi(B_{d\vert_{\cX_Q}}(x,h)) \subset B_{d^\prime}(\varphi(x),Lh^\beta)$.
Since for any $x,y \in \cX_Q$ and $h \leq D$, we have 
\[Q(y,B_d(x,h)) = Q\big(y,B_{d\vert_{\cX_Q}}(x,h)\big) = Q\big(y,\varphi^{-1}(B_{d_\varphi}(\varphi(x),h)) \big),\]
it follows that if the image measure $Q(y,\cdot) \circ \varphi^{-1}$ has a Lebesgue density $q_{\varphi}(y,\cdot)$ on $[0,1]^{\mathsf{d}_Q}$ s.t.\ 
\[\sup_{y \in \cX_Q, z \in [0,1]^{\mathsf{d}_Q}} q_{\varphi}(y,z) \leq C < \infty,\] 
then for some constant $c(\mathsf{d}_Q,L,\beta)$ depending also on the choice of the metric $d^\prime$ we have
\begin{align*}
Q(y,B_d(x,h)) = \int_{B_{d_\varphi}(\varphi(x),h)} q_{\varphi}(y,z) \diff{z} \leq C\lebesgue_{\mathsf{d}_Q}(B_{d_\varphi}(\varphi(x),h)) &= C\lebesgue_{\mathsf{d}_Q}(\varphi(B_d(x,h)\cap \cX_Q))\\
&\leq C\lebesgue_{\mathsf{d}_Q}(B_{d^\prime}(\varphi(x),Lh^\beta))\\ 
&\leq Cc(\mathsf{d}_Q,L,\beta) h^{\beta \mathsf{d}_Q}.
\end{align*}
If we now assume that $P(x,\cdot)$ has a Lebesgue density $p(x,\cdot)$ on $\cX$ s.t.\ $\inf_{(x,y) \in \cX^2} p(x,y) \geq \varepsilon_P > 0$, then $P(x,\cdot) \geq \varepsilon_P \lebesgue\vert_{[0,1]^{\mathsf{d}}}$. Thus, $\nu^P \coloneq \lebesgue\vert_{[0,1]^{\mathsf{d}}}/\lebesgue([0,1]^{\mathsf{d}})$ is a minorizing measure for the kernel $P$ and we have $\nu^P(B(x,h)) \geq c^\prime h^{\mathsf{d}}$ for some constant $c^\prime > 0$ and any $x \in [0,1]^{\mathsf{d}}$. Thus, $P$ and $Q$ have transfer exponent $\gamma = \mathsf{d} - \beta \mathsf{d}_{Q}$ and $(P,Q) \in \mathcal{T}(\mathsf{d} - \beta \mathsf{d}_{Q},c^\prime/(Cc(\mathsf{d}_Q,L,\beta)))$. In particular, the transfer exponent $\gamma$ becomes larger with increasing dimension gap $\mathsf{d} - \mathsf{d}_Q$ and decreasing smoothness of the transfer map $\varphi$.
\end{example}

Having demonstrated the diversity of covariate shifts that can be captured by kernel transfer exponents, we now quantify its statistical relevance by proving that the existence of a transfer exponent for the Markov kernel pair $(P,Q)$ implies that the associated invariant distribution pair $(\pi^P,\pi^Q)$ belongs to an $\alpha$-family, where the parameter $\alpha$ is determined by the kernel transfer exponent and the  target dimension.

\begin{proposition} \label{prop:transfer}
Let $\mathcal{X} \Subset \R^{\mathsf{d}}$ with diameter $D > 0$. Suppose that $(P,Q) \in \mathcal{T}(\gamma,C,D)$ with $\cX_Q \Subset \cX$ s.t.\  $N_{\cX_Q}(\varepsilon, d\vert_{\cX_Q}) \leq (1+k_QD/\varepsilon)^{\mathsf{d}_Q}$ for some $\mathsf{d}_Q \leq \mathsf{d}$ and $k_Q \geq 1$. Then, 
\[(\pi^P,\pi^Q) \in \begin{cases} \mathcal{D}\Big(\gamma+\mathsf{d}_Q,\frac{(2k_Q+1)^{\mathsf{d}_Q}}{C\varepsilon_P}\Big), &\text{ if } \gamma + \mathsf{d}_Q \geq \mathsf{d},\\ \mathcal{D}^\prime\Big(\gamma+\mathsf{d}_Q, \mathsf{d}_Q, \frac{(2k_Q+1)^{\mathsf{d}_Q}}{(C\varepsilon_P)\wedge 1}\Big), &\text{ if } \gamma + \mathsf{d}_Q < \mathsf{d}.\end{cases}\]
In particular, we always have $(\pi^P ,\pi^Q) \in \mathcal{D}(\gamma + \mathsf{d},9^{\mathsf{d}}/(C\varepsilon_P))$.
\end{proposition}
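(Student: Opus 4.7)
The plan is to transfer the defining ball inequality of $\mathcal{T}(\gamma,C,D)$ from the pair $(\nu^P, Q^{m_Q}(y,\cdot))$ up to the invariant pair $(\pi^P,\pi^Q)$ using invariance of both chains, and then bound the diagonal dissimilarity $\rho_h(\pi^Q,\pi^Q)$ by a covering argument on $\cX_Q$. Two preliminary observations drive everything. First, since $\supp Q^{m_Q}(y,\cdot) \subset \cX_Q$ for all $y$, $Q^{m_Q}$-invariance of $\pi^Q$ forces $\pi^Q(\cX \setminus \cX_Q) = 0$, so $\pi^Q$ is supported on $\cX_Q$. Second, the Doeblin minorization $P^{m_P}(x,\cdot) \geq \varepsilon_P \nu^P(\cdot)$ (implicit in the fact that $\nu^P$ is a minorizing measure for the uniformly ergodic chain $X^P$), combined with $P^{m_P}$-invariance of $\pi^P$, yields $\pi^P(B) \geq \varepsilon_P \nu^P(B)$ for every Borel set $B \subset \cX$.

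With these in hand, for every $x,y \in \cX_Q$ and $h \in (0,D]$ the defining inequality of $\mathcal{T}(\gamma,C,D)$ becomes
$$\pi^P(B(x,h)) \geq \varepsilon_P \nu^P(B(x,h)) \geq \varepsilon_P C (h/D)^{\gamma} Q^{m_Q}(y,B(x,h)).$$
Integrating in $y$ against $\pi^Q$ and exploiting invariance of $\pi^Q$ under $Q^{m_Q}$ eliminates the $y$-dependence, producing the pointwise bound $\pi^P(B(x,h)) \geq \varepsilon_P C (h/D)^{\gamma} \pi^Q(B(x,h))$ for $x \in \cX_Q$. Dividing, integrating against $\pi^Q(\diff x)$, and using $\supp \pi^Q \subset \cX_Q$ gives the key reduction
$$\rho_h(\pi^P,\pi^Q) \leq \frac{(D/h)^{\gamma}}{\varepsilon_P C}\,\rho_h(\pi^Q,\pi^Q), \quad h \in (0,D].$$

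The diagonal $\rho_h(\pi^Q,\pi^Q)$ is then controlled by the covering-number bound $\rho_h(\pi^Q,\pi^Q) \leq N_{\cX_Q}(h/2, d\vert_{\cX_Q})$ used in Remark \ref{rem:rate} and the assumed covering estimate; after the elementary simplification $1 + 2k_Q D/h \leq (2k_Q+1)D/h$ valid for $h \leq D$, this yields $\rho_h(\pi^Q,\pi^Q) \leq (2k_Q+1)^{\mathsf{d}_Q}(D/h)^{\mathsf{d}_Q}$. The two regimes then follow routinely: if $\gamma + \mathsf{d}_Q \geq \mathsf{d}$, the key reduction places $(\pi^P,\pi^Q)$ directly in $\mathcal{D}(\gamma + \mathsf{d}_Q,(2k_Q+1)^{\mathsf{d}_Q}/(C\varepsilon_P))$; if $\gamma + \mathsf{d}_Q < \mathsf{d}$, the same covering bound additionally gives $(h/D)^{\mathsf{d}_Q}\rho_h(\pi^Q,\pi^Q) \leq (2k_Q+1)^{\mathsf{d}_Q}$, and taking the worse of the two constants produces membership in $\mathcal{D}^\prime(\gamma + \mathsf{d}_Q, \mathsf{d}_Q, (2k_Q+1)^{\mathsf{d}_Q}/((C\varepsilon_P)\wedge 1))$.

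For the universal bound $(\pi^P,\pi^Q) \in \mathcal{D}(\gamma + \mathsf{d}, 9^{\mathsf{d}}/(C\varepsilon_P))$ the key reduction is unchanged; I simply replace the $\cX_Q$-intrinsic covering estimate by the ambient external covering bound $\rho_h(\pi^Q,\pi^Q) \leq N^{\mathrm{ext}}_{\cX}(h/2, d) \leq (1 + 8D/h)^{\mathsf{d}} \leq 9^{\mathsf{d}}(D/h)^{\mathsf{d}}$, available because $\cX \Subset \R^{\mathsf{d}}$ has diameter $D$ (as invoked in Remark \ref{rem:rate}). The only conceptual step worth flagging as the main obstacle is the propagation of the defining ball inequality from the kernel-level pair $(\nu^P, Q^{m_Q}(y,\cdot))$ to the static invariant pair $(\pi^P,\pi^Q)$; thereafter it is a matter of tracking constants through the two regimes of $\gamma + \mathsf{d}_Q$ relative to $\mathsf{d}$.
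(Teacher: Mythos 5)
Your proposal is correct and follows essentially the same route as the paper's proof: lift the Doeblin minorization to $\pi^P \geq \varepsilon_P \nu^P$, show $\supp \pi^Q \subset \cX_Q$, integrate the defining inequality of $\mathcal{T}(\gamma,C,D)$ against $\pi^Q(\diff{y})$ via invariance to obtain the distribution-level transfer exponent, and then control $\rho_h(\pi^Q,\pi^Q)$ by the covering number of $\cX_Q$ (the paper merely inlines this covering argument into the chain for $\rho_h(\pi^P,\pi^Q)$ rather than stating the reduction $\rho_h(\pi^P,\pi^Q) \leq \tfrac{(D/h)^\gamma}{C\varepsilon_P}\rho_h(\pi^Q,\pi^Q)$ separately). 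The constants in both regimes and in the final $9^{\mathsf{d}}$ bound come out the same.
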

\begin{proof} 
By the Doeblin recurrence condition and invariance of $\pi^P$ for $P$ we have 
\[\pi^P(A) = \int_{\cX} P^{m_P}(x,A) \, \pi^P(\diff{x}) \geq \varepsilon_P \nu^P(A) \pi^P(\cX) = \varepsilon_P \nu^P(A), \quad A \in \cB(\cX),\]
i.e., $\pi^P(\cdot) \geq \varepsilon_P \nu^P(\cdot)$. Moreover, if $\cX_Q \neq \cX$, since $\cX_Q$ is closed, for any $y \in \cX\setminus\cX_Q$ we can find $r > 0$ s.t.\ $B(y,r) \subset \cX \setminus \cX_Q$ and hence $Q^{m_Q}(x, B(y,r)) = 0$ for any $x \in \cX$. Hence, invariance of $\pi^Q$ for $Q$ yields $\pi^Q(B(y,r)) = 0$ and it follows that $\supp \pi^Q \subset \cX_Q$ as well. 
Hence, by our assumption $(P,Q) \in \mathcal{T}(\gamma,C,D)$ and invariance of $\pi^Q$ for $Q$, we have for any $h \leq D$ and $x \in \mathcal{X}_Q$,
\[\pi^Q(B(x,h)) = \int_{\cX_Q} Q^{m_Q}(y, B(x,h)) \, \pi^Q(\diff{y}) \leq C^{-1}(h/D)^{-\gamma} \nu^P(B(x,h)). \]
With the above it therefore follows 
\[C\varepsilon_P (h/D)^{\gamma}\pi^Q(B(x,h)) \leq \varepsilon_P \nu^P(B(x,h)) \leq \pi^P(B(x,h)), \quad x \in \cX_Q, h \leq D,\]
i.e., the distributions $(\pi^P,\pi^Q)$ have transfer exponent $\gamma$ with constant $C\varepsilon_P$ and radius $D$ in the sense of \cite{kpotufe21}. Since by assumption $N_{\cX_Q}(\varepsilon,d\vert_{\cX_Q}) \leq (1+ k_QD/\varepsilon)^{\mathsf{d}_Q}$, we can find $H \coloneq (1+ 2k_QD/h)^{\mathsf{d}_Q} $ points $x_1,\ldots,x_{H} \in \cX_Q$ s.t.\ $\cX_Q \subset \bigcup_{i=1}^{H} B(x_i,h/2)$ and by triangle inequality we have $B(x_i,h/2) \subset B(x,h)$ for any $x \in B(x_i,h/2)$. Hence, for any $h \leq D$,
\begin{align*} 
\rho_h(\pi^P,\pi^Q) &= \int_{\cX_Q} \frac{1}{\pi^P(B(x,h))} \, \pi^Q(\diff{x}) \\
&\leq \frac{D^\gamma}{C\varepsilon_P h^\gamma} \sum_{i=1}^{H} \int_{B(x_i,h/2)} \frac{1}{\pi^Q(B(x,h))} \, \pi^Q(\diff{x})\\\ 
&\leq \frac{D^\gamma}{C\varepsilon_P h^\gamma} \sum_{i=1}^{H} \int_{B(x_i,h/2)} \frac{1}{\pi^Q(B(x_i,h/2))} \, \pi^Q(\diff{x})\\ 
&= \frac{D^\gamma (1+2k_QD/h)^{\mathsf{d}_Q}}{ C\varepsilon_P h^\gamma}\\
&\leq  \frac{(2k_Q + 1)^{\mathsf{d}_Q}}{ C \varepsilon_P (h/D)^{\gamma + \mathsf{d}_Q}}.
\end{align*}
Upon noting that $\rho_h(\pi^Q,\pi^Q) \leq N_{\cX_Q}(h/2,d\vert_{\cX_Q})$, the first statement follows. The final assertion now is a simple consequence of the fact that by Lemma \ref{lem:cov} we always have 
\[N_{\cX_Q}(\varepsilon,d\vert_{\cX_Q}) \leq N^{\mathrm{ext}}_{\cX_Q}(\varepsilon/2,d\vert_{\cX_Q}) \leq N_{\cX}(\varepsilon/2,d) \leq  (1+4D/\varepsilon)^{\mathsf{d}}.\]
\end{proof}

Before we interpret this result, let us observe some concrete implications for a dimension gap  between training and target Markov chains as discussed in Example \ref{ex:dim}.
\begin{excont}[continues=ex:dim] 
Since $\varphi\colon \cX_Q \to [0,1]^{\mathsf{d}_Q}$ is an isometry, we have the covering number identity $N_{\cX_Q}(\varepsilon,d\vert_{\cX_Q}) = N_{[0,1]^{\mathsf{d}_Q}}(\varepsilon, d_{\varphi})$. 
If now also $\varphi^{-1}$ is $\beta^\prime$-Hölder continuous for some $\beta^\prime \in (0,1]$, i.e., $d(\varphi^{-1}(x),\varphi^{-1}(y)) \leq L^\prime \lvert d^\prime(x,y) \rvert^{\beta^\prime}$ for $x,y \in [0,1]^{\mathsf{d}_Q}$ with $L^\prime \geq 1$, then  $B_{d_{\varphi}}(x,\varepsilon) \supset B_{d^\prime}(x,(\varepsilon/L^\prime)^{1/\beta^\prime})$. Therefore, Lemma \ref{lem:cov} gives for $\varepsilon \leq 1$,
\[N_{\cX_Q}(\varepsilon,d\vert_{\cX_Q}) \leq \big(1 +2(L^\prime/\varepsilon)^{1/\beta^\prime} \big)^{\mathsf{d}_Q} \leq \big(1 +2L^\prime/\varepsilon \big)^{\mathsf{d}_Q/\beta^\prime}.\]
Since $(P,Q)$ has kernel transfer exponent $\gamma = \mathsf{d} - \beta\mathsf{d}_Q$, it now follows from Proposition \ref{prop:transfer} that the distribution pair $(\pi^P,\pi^Q)$ has an $\alpha$-family index 
\[\alpha = \begin{cases} \mathsf{d} + \tfrac{1-\beta\beta^\prime}{\beta^\prime}\mathsf{d}_Q, &\text{if } \mathsf{d}_Q/\beta^\prime \leq \mathsf{d},\\ 2\mathsf{d} - \beta\mathsf{d}_Q, &\text{if } \mathsf{d}_Q/\beta^\prime > \mathsf{d}.\end{cases}\] 
The convergence rates implied by Proposition \ref{prop:alpha} for $n_P \gg n_Q$ therefore coincide with the usual nonparametric rate for the source data dimension $\mathsf{d}$ if $\varphi$ and $\varphi^{-1}$ are Lipschitz and increase with decreasing Hölder smoothness of $\varphi$ and $\varphi^{-1}$.
\end{excont}

On a higher level, Proposition \ref{prop:transfer} demonstrates that without further information on the target kernel $Q$, the transfer exponent $\gamma$  cannot tell us reliably  by how much estimation rates are  slowed down due to the distributional shift in source and target data. Indeed, we can only guarantee an a priori $\alpha$-family index $\gamma + \mathsf{d} \geq \mathsf{d}$, which by Proposition \ref{prop:alpha} corresponds to the regime where the generalization risk w.r.t.\ the target distribution $\pi^Q$ is at least as large as the $\pi^P$-generalization risk without covariate shift. If for the effective target dimension in terms of the support covering number, $\mathsf{d}_Q$, we have $\mathsf{d}_Q < \mathsf{d}$, cf.\ Example \ref{ex:dim}, this $\alpha$-index may be significantly reduced and the model may even belong to the favorable regime $\alpha < \mathsf{d}$ depending on the interplay between $\gamma$ and $\mathsf{d}_Q$, cf.\ Example \ref{ex:alpha} in Appendix \ref{app:exalpha}. At the same time, it is important to note  that if the target dimension $\mathsf{d}_Q$ is strictly larger than the source dimension $\mathsf{d}$ associated to the Markov chain $X^P$, it follows from Lemma \ref{lem:explosion} that $\rho_h(\pi^P,\pi^Q)$ is infinite for $h$ small enough. Thus, for a covariate shift to a higher dimensional target, convergence guarantees for the generalization risk can only be achieved given a significant number of training observations generating the limiting distribution $\pi^Q$. This matches the intuition that in absence of further assumptions the regression function cannot be learned on completely unseen regions of the training data. 

Taking into account that the $\alpha$-index is proportional to the estimation rate, the concept of $\alpha$-families is therefore better suited for summarizing the difficulty of the covariate shift problem for Markov data compared to the kernel transfer exponent. The latter, however, remains a useful concept for verification of the $\alpha$-family property of source-target invariant distributions. In this sense, the findings from \cite{pathak22} in the scalar i.i.d.\ case are reinforced to multivariate Markov data.

\appendix
\section{Kernel pairs for $\alpha$-family index $\alpha < \mathsf{d}$} \label{app:exalpha}
By combining ideas from Example \ref{ex:beta} and Example \ref{ex:dim}, we construct  Markov kernels $(P,Q)$ s.t.\ their invariant distributions belong to an $\alpha$-family with $\alpha$-index strictly smaller than the dimension $\mathsf{d}$. In particular, for such Markov chains, the the NW estimator generalizes better to the target distribution than without covariate shift.
\begin{example}\label{ex:alpha}
Let $\mathsf{d} \geq 2$, $\cX = [0,1]^{\mathsf{d}}$ and $\cX_Q = [0,1]^{\mathsf{d}_Q} \times \{0\}^{\mathsf{d}-\mathsf{d}_Q}$ for some $\mathsf{d}_Q < \mathsf{d}$ and let $d = d_\infty$. Denote by $\mathbf{B}_n((\gamma_i)_{i=1,\ldots,n})$ the joint distribution of $n$ independent $\mathrm{B}(\gamma_i,1)$ distributed random variables for $\gamma_i > 0$, i.e., for $X \sim \mathbf{B}_n((\gamma_i)_{i=1,\ldots,n})$ we have 
\[\PP(X \in \diff{y})/\diff{y} =  \prod_{i=1}^n \gamma_i y_i^{\gamma_i-1} \one_{(0,1]}(y_i) \eqcolon \psi^n_{\bm{\gamma}}(y), \quad y \in [0,1]^{n}, \quad \bm{\gamma} = (\gamma_1,\ldots,\gamma_n).\] 
Let also $\iota_i$ be  functions s.t.\ $\iota_i\colon [0,1]^{\mathsf{d}} \to [\varepsilon,1]$ for some $\varepsilon \in (0,1]$ and $\bm{\gamma}^P \in (0,1)^{\mathsf{d}}, \bm{\gamma}^Q\in (0,1)^{\mathsf{d}_Q}$ s.t.\ $\gamma_i^P \geq \gamma_i^Q$ for any $i = 1,\ldots,\mathsf{d}_Q$. Let now $P$ be the Markov kernel on $[0,1]^{\mathsf{d}}$ defined by 
\[P(x,\diff{y}) = \psi^{\mathsf{d}}_{(\gamma^P_i \iota_i(x))_{i \in [\mathsf{d}]}}(y) \diff{y} \eqcolon p(x,y) \diff{y}, \quad y \in [0,1]^{\mathsf{d}},\] 
and $Q$ be the Markov kernel on $[0,1]^{\mathsf{d}}$ defined by 
\begin{align*} 
Q(x,\diff{y}) &= \psi^{\mathsf{d}_Q}_{(\gamma^Q_i \iota_i(x))_{i \in [\mathsf{d}_Q]}}(y^{\mathsf{d}_Q}) \diff{y^{\mathsf{d}_Q}} \prod_{i= \mathsf{d}_Q +1}^{\mathsf{d}} \delta_0(\diff{y_i})\\
&\eqcolon q(x,y^{\mathsf{d}_Q}) \diff{y^{\mathsf{d}_Q}} \prod_{i= \mathsf{d}_Q +1}^{\mathsf{d}} \delta_0(\diff{y_i}) , \qquad y^{\mathsf{d}_Q} \coloneq (y_1,\ldots,y_{\mathsf{d}_Q}), y \in [0,1]^{\mathsf{d}}.
\end{align*} 
In particular $\supp Q(x,\cdot) = \cX_Q$ for any $x \in \cX$. Since 
\[p(x,y) \geq \varepsilon^{\mathsf{d}} \psi^{\mathsf{d}}_{\bm{\gamma}^P}(y), \quad q(x,y^{\mathsf{d}_Q}) \geq \varepsilon^{\mathsf{d}_Q} \psi^{\mathsf{d}_Q}_{\bm{\gamma}^Q}(y^{\mathsf{d}_Q}), \quad x,y \in [0,1]^{\mathsf{d}},\] 
it follows that $P,Q$ generate uniformly ergodic Markov chains and $\nu^P \coloneq \bm{B}_{\mathsf{d}}(\bm{\gamma}^P)$ is a minorizing measure for $P$. Let $x \in \cX_Q$. Then, for any $y \in \cX_Q$ (see also Example \ref{ex:beta}),
\begin{align*} 
h^{\sum_{i=1}^{\mathsf{d}} \gamma_i^P - \sum_{i=1}^{\mathsf{d}_Q}\varepsilon\gamma_i^Q} Q(y, B(x,h)) &\leq \prod_{i= \mathsf{d}_Q+1}^\mathsf{d} h^{\gamma_i^P} \prod_{i=1}^{\mathsf{d}_Q} \gamma_i^Q h^{\gamma_i^P - \varepsilon\gamma_i^Q} \int_{(x_i-h) \vee 0}^{(x_i + h) \wedge 1}  z_i^{\varepsilon\gamma_i^Q -1} \diff{z_i}\\
&\leq \prod_{i= \mathsf{d}_Q+1}^\mathsf{d} h^{\gamma_i^P} \prod_{i=1}^{\mathsf{d}_Q} \frac{1}{\varepsilon} \int_{(x_i-h)\vee 0}^{(x_i + h)\wedge 1} \gamma_i^P z_i^{\gamma_i^P -1} \diff{z_i}\\
&= \varepsilon^{-{\mathsf{d}_Q}}\prod_{i= \mathsf{d}_Q+1}^\mathsf{d} \int_0^h \gamma_i^P z^{\gamma^P_i - 1} \diff{z_i}  \prod_{i=1}^{\mathsf{d}_Q}  \int_{(x_i-h)\vee 0}^{(x_i + h)\wedge 1} \gamma_i^P z_i^{\gamma_i^P -1} \diff{z_i}\\
&= \varepsilon^{-{\mathsf{d}_Q}}\nu^P(B(x,h)).
\end{align*}
It follows that 
\[(P,Q) \in \mathcal{T}\Big(\sum_{i=1}^{\mathsf{d}} \gamma_i^P - \varepsilon \sum_{i=1}^{\mathsf{d}_Q} \gamma_i^Q, \varepsilon^{\mathsf{d}_Q}, 1\Big),\]
and since $N_{\cX_Q}(\varepsilon, d_\infty\vert_{\cX_Q}) \leq (1+ 1/\varepsilon)^{\mathsf{d}_Q}$, it follows from Proposition \ref{prop:transfer} that 
\[(\pi^P,\pi^Q) \in \begin{cases} \mathcal{D}\Big(\sum_{i=1}^{\mathsf{d}} \gamma_i^P + \sum_{i=1}^{\mathsf{d}_Q} (1 - \varepsilon \gamma_i^Q),\frac{3^{\mathsf{d}_Q}}{\varepsilon^{\mathsf{d} + \mathsf{d}_Q}}\Big), &\text{ if } \sum_{i=1}^{\mathsf{d}} (1- \gamma_i^P) \leq \sum_{i=1}^{\mathsf{d}_Q} (1- \varepsilon \gamma_i^Q),\\ \mathcal{D}^\prime\Big(\sum_{i=1}^{\mathsf{d}} \gamma_i^P + \sum_{i=1}^{\mathsf{d}_Q} (1 - \varepsilon \gamma_i^Q), \mathsf{d}_Q, \frac{3^{\mathsf{d}_Q}}{\varepsilon^{\mathsf{d} + \mathsf{d}_Q}}\Big), &\text{ if } \sum_{i=1}^{\mathsf{d}} (1- \gamma_i^P) > \sum_{i=1}^{\mathsf{d}_Q} (1- \varepsilon\gamma_i^Q).\end{cases}\]
Hence, depending on the relation between the parameters $(\varepsilon,\bm{\gamma}^P,\bm{\gamma}^Q)$ and the dimensions $\mathsf{d},\mathsf{d}_Q$ of source and target Markov chains, $(\pi^P,\pi^Q)$ can belong to an $\alpha$-family with index $\alpha < \mathsf{d}$. In the simple scenario where $\varepsilon = \gamma_i^P = \gamma_j^Q$ for $i \in [\mathsf{d}], j \in [\mathsf{d}_Q]$, this is the case iff $\mathsf{d}_Q < \mathsf{d}/(1+\varepsilon)$, i.e., the target dimension must be sufficiently small to improve generalization rates compared to the situation without covariate shift.
\end{example}

\section{Auxiliary results}
\begin{lemma}\label{lem:explosion}
Let $(\mathcal{X},d)$ be a metric space and $\PP,\mathbb{Q}$ be probability measures on $(\cX,\cB(\cX))$. If for some set $\Lambda \in \cB(\cX)$ and $h > 0$ it holds $\QQ(\Lambda) > 0$ and $\PP(\Lambda_h) = 0$, where $\Lambda_h \coloneq \{x \in \mathcal{X} : d(x,\Lambda) \leq h\}$, then
$\rho_h(\PP,\QQ) = +\infty$.
\end{lemma}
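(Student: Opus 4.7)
The plan is to exploit the convention $1/0 = +\infty$ together with the simple observation that balls around points of $\Lambda$ are contained in the $h$-neighborhood $\Lambda_h$. Concretely, I would first note that for any $x \in \Lambda$ and any $y \in B(x,h)$ we have $d(y,\Lambda) \le d(y,x) \le h$, so $B(x,h) \subset \Lambda_h$. By monotonicity of $\PP$ this gives $\PP(B(x,h)) \le \PP(\Lambda_h) = 0$, hence $\PP(B(x,h)) = 0$ for every $x \in \Lambda$.

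Next, with the convention $\tfrac{1}{0} = +\infty$, the integrand $x \mapsto 1/\PP(B(x,h))$ in the definition of $\rho_h(\PP,\QQ)$ equals $+\infty$ on the entire set $\Lambda$. Restricting the integral yields
\[
\rho_h(\PP,\QQ) = \int_{\cX} \frac{1}{\PP(B(x,h))}\, \QQ(\diff x) \ge \int_{\Lambda} (+\infty)\, \QQ(\diff x),
\]
and since $\QQ(\Lambda) > 0$ by assumption, the right-hand side is $+\infty$. This gives $\rho_h(\PP,\QQ) = +\infty$ as claimed.

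There is no real obstacle here; the only point to be careful about is the set-theoretic inclusion $B(x,h) \subset \Lambda_h$ for $x \in \Lambda$, which follows directly from the triangle-like inequality $d(y,\Lambda) \le d(y,x)$ when $x \in \Lambda$, together with the closed-ball definition $d(x,y) \le h$. Everything else is a one-line consequence of the conventions attached to $\rho_h$.
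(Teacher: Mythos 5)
Your proof is correct and follows the same route as the paper: show $B(x,h)\subset\Lambda_h$ for $x\in\Lambda$, conclude $\PP(B(x,h))=0$ there, and restrict the integral to $\Lambda$ where the integrand is $+\infty$ on a set of positive $\QQ$-measure. No differences worth noting.
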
 
\begin{proof} 
Since for any $y \in \Lambda$ we have $B(y,h) \subset \Lambda_h$, it holds $\PP(B(y,h)) \leq \PP(\Lambda_h) = 0.$ Using that $\QQ(\Lambda) > 0$ we therefore obtain 
\[\rho_h(\PP,\QQ) \geq \int_{\Lambda} \frac{1}{\PP(B(y,h))} \, \QQ(\diff{y}) = +\infty.\]
\end{proof}

For completeness we recall the following standard covering  bound.
\begin{lemma}\label{lem:cov}
Let $d$ be some metric on $\R^{\mathsf{d}}$ and $\cX \subset \R^{\mathsf{d}}$ with $d$-diameter $0 < D < \infty$. Then, for any $\varepsilon > 0$, $N_{\cX}(\varepsilon,d\vert_{\cX}) \leq (1+2D/\varepsilon)^{\mathsf{d}}$.
\end{lemma}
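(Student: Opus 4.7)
The statement is the classical volume-based covering bound, so the plan is to follow the standard packing--covering duality argument, taking advantage of the fact that by the notation convention the metric $d$ on $\R^{\mathsf{d}}$ is induced by a vector norm, so balls are translates and positive scalings of the unit ball.

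First I would produce a maximal $\varepsilon$-separated subset $\{x_1,\dots,x_N\} \subset \cX$, that is, a set with pairwise $d$-distance strictly greater than $\varepsilon$ that cannot be extended. Maximality implies that $\cX \subset \bigcup_{i=1}^N B_d(x_i,\varepsilon)$, so $N_{\cX}(\varepsilon,d\vert_{\cX}) \leq N$, reducing the problem to bounding the size of the packing.

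Next I would use a volume comparison in $\R^{\mathsf{d}}$. The open $d$-balls $B_d(x_i,\varepsilon/2)$ are pairwise disjoint by the separation property. Fixing any $x_0 \in \cX$, the diameter assumption gives $B_d(x_i,\varepsilon/2) \subset B_d(x_0,D+\varepsilon/2)$ by the triangle inequality. Since $d$ is norm-induced, Lebesgue measure scales as $\lebesgue(B_d(x,r)) = r^{\mathsf{d}}\lebesgue(B_d(0,1))$, so disjointness and containment yield
\[
N \cdot (\varepsilon/2)^{\mathsf{d}} \lebesgue(B_d(0,1)) \leq (D+\varepsilon/2)^{\mathsf{d}} \lebesgue(B_d(0,1)),
\]
and dividing gives $N \leq (1 + 2D/\varepsilon)^{\mathsf{d}}$, which is the claim.

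There is no real obstacle here; the only subtlety is ensuring that the argument is valid for an arbitrary norm-induced metric (which it is, since all such unit balls are bounded, convex, symmetric, nonempty-interior sets and hence have positive finite Lebesgue measure, so the scaling identity used above holds). If one preferred to avoid the volume comparison, the same bound can be obtained by iteratively placing centers in $\cX$ while removing $\varepsilon/2$-neighborhoods from a slightly enlarged ball around a fixed base point.
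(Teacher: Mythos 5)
Your proof is correct and rests on the same volume-comparison idea as the paper's: the paper invokes Wainwright's Lemma 5.7 for an enclosing ball of radius $D/2$ and then passes from the external to the internal covering number at scale $\varepsilon/2$, whereas you inline that volume argument directly via a maximal $\varepsilon$-separated subset of $\cX$ itself. Both routes yield the identical constant $(1+2D/\varepsilon)^{\mathsf{d}}$; yours is self-contained but otherwise equivalent.
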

\begin{proof} 
Let $B_{D/2} \subset \R^{\mathsf{d}}$ be some $d$-ball with radius $D/2$ s.t.\ $\cX \subset B_{D/2}$. Lemma 5.7 in \cite{wain19} gives $N_{B_{D/2}}(\varepsilon,d) \leq (1+D/\varepsilon)^{\mathsf{d}}$, hence the external covering number $N^{\mathrm{ext}}_{\cX}(\varepsilon,d)$ is bounded by $(1+D/\varepsilon)^{\mathsf{d}}$. Since $N_{\cX}(\varepsilon, d\vert_{\cX}) \leq N^{\mathrm{ext}}_{\cX}(\varepsilon/2,d)$ the result follows.
\end{proof}
\printbibliography
\end{document}